\newcommand{\be}{\begin{equation}}
\newcommand{\ee}{\end{equation}}
\newcommand{\bea}{\begin{equnarray}}
\newcommand{\eea}{\end{eqnarray}}
\newcommand{\beann}{\begin{equnarray*}}
\newcommand{\eeann}{\end{eqnarray*}}
\newcommand{\nn}{\nonumber}
\newcommand{\ba}{\begin{array}}
\newcommand{\ea}{\end{array}}
\newcommand{\bs}{\boldsymbol}
\newcommand{\bse}{{\boldsymbol{e}}}
\newcommand{\bsf}{{\boldsymbol{f}}}
\newcommand{\bsone}{\bs{1}}
\newcommand{\N}{\mathbb N}
\newcommand{\R}{\mathbb R}
\newcommand{\tlt}{{\tilde{t}}}
\newcommand{\tL}{{\tilde{L}}}
\newcommand{\tDelta}{{\tilde{\Delta}}}
\newcommand{\adjSigma}{\Sigma_{\rm adj}}
\newcommand{\chQ}{{\check{Q}}}
\newcommand\dashint{\mathchoice
  {\int\kern-10pt-}
  {\int\kern-8.5pt-}
  {\int\kern-6.1pt-}
  {\int\kern-4.58pt-}}
\newcommand{\ds}{\displaystyle}
\DeclareMathOperator{\rank}{rank}
\DeclareMathOperator{\diag}{diag}
\DeclareMathOperator{\Tr}{Tr}
\DeclareMathOperator{\Spec}{Spec}
\DeclareMathOperator{\re}{Re}
\newtheorem{theorem}{Theorem}
\newtheorem{lemma}{Lemma}
\newtheorem{corollary}{Corollary}
\newtheorem{prop}{Proposition}
\newcommand{\mathscale}[2]{\text{\scalebox{#1}{${#2}$}}}
\title{Functional Equations and Pole Structure of the Bartholdi Zeta Function}
\author[1,2]{So Matsuura\thanks{s.matsu@keio.jp}}
\author[2]{Kazutoshi Ohta\thanks{kazutoshi.ohta@mi.meijigakuin.ac.jp}}
\affil[1]{\it Hiyoshi Departments of Physics,
and Research and Education Center for Natural Sciences,
Keio University, 
Yokohama, Kanagawa 223-8521, Japan}
\affil[2]{\it Institute for Mathematical Informatics,
Meiji Gakuin University, Yokohama, Kanagawa 244-8539, Japan}
\date{}
\begin{document}
\maketitle


\begin{center}
{\bf Abstract}
\end{center}
In this paper, 
we investigate the Bartholdi zeta function on a connected simple digraph with $n_V$ vertices and $n_E$ edges. 
We derive a functional equation for the Bartholdi zeta function $\zeta_G(q,u)$ on a regular graph $G$ with respect to the bump parameter $u$.
We also find an equivalence between the Bartholdi zeta function with a specific value of $u$ and the Ihara zeta function at $u=0$.
We determine bounds of the critical strip of $\zeta_G(q,u)$ for a general graph. 
If $G$ is a $(t+1)$-regular graph, the bounds are saturated and $q=(1-u)^{-1}$ and $q=(t+u)^{-1}$ are the poles at the boundaries of the critical strip for $u\ne 1, -t$.
When $G$ is the regular graph and the spectrum of the adjacency matrix satisfies a certain condition, 
$\zeta_G(q,u)$ satisfies the so-called Riemann hypothesis. 
For $u \ne 1$, $q=\pm(1-u)^{-1}$ are poles of $\zeta_G(q,u)$ unless $G$ is tree.
Although the order of the pole at $q=(1-u)^{-1}$ is $n_E-n_V+1$ if $u\ne u_* \equiv 1-\frac{n_E}{n_V}$, 
it is enhanced at $u=u_*$. 
In particular, 
if the Moore-Penrose inverse of the incidence matrix $L^+$ and the degree vector $\vec{d}$ satisfy the condition $|L^+ \vec{d}|^2\ne n_E$, the order of the pole at $q=(1-u)^{-1}$ increases only by one at $u=u_*$.
The order of the pole at $q=-(1-u)^{-1}$ coincides with that at $q=(1-u)^{-1}$ if $G$ is bipartite and is $n_E-n_V$ otherwise. 

\newpage

\section{Introduction and Conclusion}
\label{sec:intro}

Graph zeta functions \cite{Ihara:original,MR607504,sunada1986functions,Hashimoto1990ONZA,bass1992ihara,bartholdi2000counting,mizuno2003bartholdi},
initially introduced as an analog to the Selberg zeta function, have emerged as a significant tool primarily in mathematics. 
For instance, in algebraic graph theory, they are instrumental in analyzing graph invariants and characterizing graph isomorphisms. 
In the realm of combinatorics, graph zeta functions help in analyzing the complexity of graph structures and in deriving enumerative properties of walks on graphs. Furthermore, they play a significant role in number theory, particularly in the examination of arithmetic properties of graphs and their connections to modular forms \cite{sugiyama2017}
\footnote{
  For comprehensive reviews and references of the graph zeta functions, see also \cite{terras_2010}.
}.

Recently, 
an interesting relationship between the graph zeta functions and gauge theories on the discrete spacetime has been discovered in
\cite{matsuura2022kazakov, matsuura2022graph,PhysRevD.108.054504,Matsuura:2024gdu}, 
where
the so-called Kazakov-Migdal (KM) model \cite{kazakov1993induced} has been generalized on the graph $G$. 
A characteristic feature of this generalized KM model is that the partition function is expressed as an integral of a matrix weighted graph zeta function over the unitary matrices,
\be
Z = {\cal N}\int \prod_{e}d U_e \,
\zeta_G(q,u;U)\,,
\ee
where 
${\cal N}$ is a normalization constant and $\zeta_G(q,u; U)$ is the Bartholdi zeta function weighted by a specific representation of the unitary group $U_e$ on the edge $e$. 
This enables the generalized KM model to accurately enumerate Wilson loops on the graph and to precisely evaluate the partition function in the large size limit of the unitary matrix.

In the model studied in \cite{matsuura2022kazakov, matsuura2022graph}, the unitary matrices $U_e$ are chosen in the adjoint representation, 
which is a straightforward extension of the original KM model. 
While this model displays fascinating characteristics from a mathematical point of view, 
its application as a model of QCD in physics, which was the foundational motivation behind the KM model, is faced with the same difficulties as the original KM model due to the presence of an undesirable local $U(1)$ symmetry \cite{kogan1992induced,kogan1993area,kogan1993continuum, migdal1993bose,cline1993induced,balakrishna1994difficulties}.
In \cite{PhysRevD.108.054504}, this problem is resolved by choosing the representation of the unitary matrices in the fundamental representation.
We call the model the fundamental Kazakov-Migdal (FKM) model.

As a result, the effective action of the FKM model is expressed in terms of a sum of all possible Wilson loops on the graph in the fundamental representation.
The effective action also contains the usual Wilson action of lattice gauge theory in a certain parameter limit.
Therefore, we can expect that the FKM model induces a realistic gauge theory in an appropriate limit of the parameters.
In \cite{PhysRevD.108.054504}, 
it was also shown that the FKM model undergoes the so-called Gross-Witten-Wadia (GWW) phase transition \cite{Gross:1980he,Wadia:1980cp},
regardless of the details of the graph. 
This suggests that the GWW phase transition is a quite universal phenomenon of QCD-type gauge theories.

In \cite{Matsuura:2024gdu},  
using the properties of the Ihara zeta function extensively,
the nature of the FKM model has been examined in detail in the case where the partition function is expressed by the
integral of the matrix weighted Ihara zeta function. 
It was found that the FKM model on a regular graph exhibits a duality between large and small values of $q$ as a consequence of the functional equation of the Ihara zeta function. 
Moreover, it was revealed that the FKM model shows instability when $q$ is in the so-called critical strip of the Ihara zeta function. 
The critical strip corresponds to the distribution of the poles of the Ihara zeta function.
This suggests that the pole structure of the graph zeta function in the complex plane is strongly related to the non-perturbative properties of the FKM model. 

Apart from the $u=0$ case studied in \cite{Matsuura:2024gdu},
we can also expect that the FKM model has richer structure in the parameter space of $q$ and $u$, where the partition function is expressed by the Bartholdi zeta function.
However, despite the extensive study of the Ihara zeta function (see e.g.~Ch.~7 in the book \cite{terras_2010}
and references therein) 
and the Bartholdi zeta function \cite{guido2008bartholdi},
details of the Bartholdi zeta function such as the functional equation with respective to the bump parameter and pole structure cannot be found in the literature.
Therefore, we are motivated to investigate deeper properties of Bartholdi zeta function mathematically in this paper,
prior to applying it to the physical model.
These properties will not only provide a basis for examining the FKM model, but also reveal non-trivial relationships among the graph zeta functions.

In this paper, we consider only connected simple directed graphs. 
The main results are the following: 
\begin{itemize}
  \item The Bartholdi zeta function $\zeta_G(q,u)$ on a regular graph $G$ 
  satisfies a functional equation 
  with respect to the bump parameter $u$ (Theorem \ref{th:funcional eq for u}). 
  \item The Bartholdi zeta function $\zeta_G(q,u)$ on a $(t+1)$-regular graph $G$ with $u=1-t$ is equivalent to the Ihara zeta function $\zeta_G(q)$ on the same graph up to an irrelevant factor (Corollary \ref{col:Ihara-Bartholdi duality}). 
  \item There are bounds of the boundaries of the critical strip of $\zeta_G(q,u)$ on a general graph $G$ (Proposition \ref{prop:range of poles}). 
  \item $\zeta_G(q,u)$ on a general graph $G$ has a simple pole at the lower boundary of the critical strip of $|q|$ if $u\ge 0$ and has a simple pole at the upper boundary if $u\le -\tlt_M+1$ where $\tlt_M+1$ is the maximal degree of the vertices in $G$ (Theorem \ref{th:first pole at boundary}).
  \item Suppose $G$ is a $(t+1)$-regular graph and $u\ne 1,-t$. 
  Then, $q=(1-u)^{-1}$ and $q=(t+u)^{-1}$ are poles at the boundaries of the critical strip which saturate the bounds given in Proposition \ref{prop:range of poles} (Theorem \ref{th:range of poles regular}). 
  \item When the spectrum of the adjacency matrix satisfies a specific condition, the Bartholdi zeta function on a regular graph satisfies the so-called Riemann hypothesis (Theorem \ref{th:Riemann hypothesis Bartholdi}). 
  \item Suppose $u\ne 1$. If $G$ is not a tree graph, $q=(1-u)^{-1}$ is a pole of $\zeta_G(q,u)$ of order $n_E-n_V+1$ when $u\ne u_* \equiv 1-\frac{n_E}{n_V}$ while 
  the order is enhanced to more than or equal to $n_E-n_V+2$ when and only when $u=u_*$. 
  If $G$ is a tree graph, $q=(1-u)^{-1}$ is not a pole when $u\ne u_*$ and it becomes a pole when and only when $u=u_*$.  
  In particular, if the Moore-Penrose inverse of the incidence matrix $L^{+}$ and the degree vector $\vec{d}$ satisfy $|L^+ \vec{d}|^2\ne n_E$, the enhanced degree of the pole at $u=u_*$ is exactly $n_E-n_V+2$ (Theorem \ref{th:enhancement}). 
  \item Suppose $u\ne 1$. 
  When $G$ is not bipartite, 
  $q=-(1-u)^{-1}$ is a pole of $\zeta_G(q,u)$ of order $n_E-n_V$ if $n_E>n_V$ 
  and is not a pole if $n_E=n_V$. 
  When $G$ is bipartite, 
  the bahavior of $\zeta_G(q,u)$ at $q=-(1-u)^{-1}$ is the same as that at $q=(1-u)^{-1}$. 
  (Corollary \ref{col:pole at minus 1-u inv}). 
\end{itemize}
Note that, if $G$ is a regular graph, $u=u_*$ coincide with the critical value of the functional equation of the Bartholdi zeta function. 
This implies that the Bartholdi zeta function on an irregular graph also has a comprehensive relation among its parameters, and that the functional equations are a manifestation of that relation in a regular graph.

The organization of this paper is as follows: 
In the next section, we summarize notations of the graph theory and introduce the Ihara and Bartholdi zeta functions as preliminaries. 
In Sec.~\ref{sec:functional equations}, 
we discuss the functional equations of the Bartholdi zeta function. 
In Sec.~\ref{sec:poles}, 
we discuss the pole structure of the Bartholdi zeta function. 
The final section is devoted to discussions and future problems.

\section{Preliminaries}
\label{sec:preliminaries}

\subsection{Graphs, paths and cycles} 
\label{subsec:notations}
A graph $G$ is an object which consists of
vertices and edges that connect two vertices. 
We denote the set of the vertices and edges by $V$ and
$E$, respectively,
with their cardinalities represented by $n_V$ and $n_E$. 
We assume that the edges have directions. 
Therefore, the term {\it graph} in this paper refers to a directed graph (digraph). 
A directed edge $e$ starting from a vertex $v$ and ending at a vertex $v'$ is represented by $e=\langle v,v'\rangle$, where
the two vertices $v$ and $v'$ 
are called 
the {\it source} $v=s(e)$ and the {\it target} $v'=t(e)$ of $e$, respectively.
The degree of a vertex $v$, denoted as $\deg v$, is the number of its neighboring vertices. 
It is useful to consider 
both a directed edge $e=\langle v,v'\rangle$ and its opposite $e^{-1}=\langle v',v\rangle$ at the same time. 
We then enhance the set of the edges $E$ to
$E_D=\{\bse_a|a=1,\cdots,2n_E\}
=\{e_1,\cdots, e_{n_E}, e_1^{-1}, \cdots e_{n_E}^{-1}\}$,
combining with the opposite directed edges.
In the following, we assume that there is at most one edge connecting two vertices, and  no edge connecting the same vertex. In other words, we consider only simple digraphs in this paper.

A path (or walk) of length $l$ on the graph $G$ is a sequence of $l$ edges, $P=\bse_{a_1} \bse_{a_2} \cdots \bse_{a_l}$ satisfying the conditions $t(\bse_{a_i})=s(\bse_{a_{i+1}})$ ($i=1,\cdots,l-1$). 
The inverse of the path $P$ is defined as $P^{-1}=\bse_{a_l}^{-1}\cdots \bse_{a_1}^{-1}$. 
A path $C$ whose ending vertex coincides with the starting one
is called a cycle.
We can define the product of two paths $P_1=\bse_{a_1} \cdots \bse_{a_{l_1}}$ and $P=\bse_{a'_1} \cdots \bse_{a'_{l_2}}$ when $t(\bse_{a_l})=s(\bse_{a'_1})$ as $P_1P_2 =\bse_{a_1} \cdots \bse_{a_{l_1}}\bse_{a'_1} \cdots \bse_{a'_{l_2}}$. 
In particular, a power of a cycle $C^n$, which is constructed through this product rule, becomes a cycle again. 
In this paper, we consider only connected graphs, that is, graphs where any pair of vertices is connected by some path.

A part of a path of length $l$ satisfying $\bse_{a_{j+1}}=\bse_{a_{j}}^{-1}$ ($j=1,\ldots, l-1$) is called a backtracking. 
If a cycle of length $l$ satisfies $\bse_{a_l}=\bse_{a_1}^{-1}$, this part is called a tail of the cycle $C$. 
For a cycle $C$, the equivalence class $[C]$ is defined by the set of cyclic rotations of $C$ as 
\[[C] = \{\bse_{a_1} \bse_{a_2} \cdots \bse_{a_l}, \,
\bse_{a_2} \cdots \bse_{a_l}\bse_{a_1}, \, \ldots \,,
\bse_{a_l}\bse_{a_1}\cdots\bse_{a_{l-1}}\}\,. \]
For the equivalence class of the cycle $[C]$, the backtracking and the tail are identical, hence they are collectively called a bump.

A cycle $C$ is called reduced when it does not contain the bump. 
A cycle $C$ is called primitive when $C$ does not satisfy $C \ne (C')^r$ for any cycle $C'$ and $r\ge 2$.
We denote the set of representatives of reduced cycles as $[{\mathcal P}_R] \subset [{\mathcal P}]$.
Since a primitive reduced cycle $C$ has its inverse $C^{-1}$ also be a primitive reduced cycle of equal length, 
the set $[{\mathcal P}_R]$ can be partitioned into two disjoint unions;  
$[{\cal P}_R]=[\Pi_{+}] \sqcup [\Pi_{-}]$, 
where $[\Pi_{-}]$ consists of the inverses of elements in $[\Pi_{+}]$. These elements in $[\Pi_{+}]$ are referred to as the representatives of chiral primitive reduced cycles.

\subsection{Matrices associated with the graph}
\label{subsec:matrices}

We can define several matrices associated with a graph $G$. 

The adjacency matrix $A$ is a symmetric matrix of size $n_V$ whose element $A_{vv'}$ ($v,v'\in V$) is either $1$ or $0$ depending on whether $v'$ is a neighbor of $v$ or not;
\begin{equation}
  A_{vv'} \equiv \sum_{\bse\in E_D}
  \delta_{\langle{v},{v'}\rangle,\bse} \,.  \quad (v,v'\in V)
  \label{eq:matrix A}
\end{equation}

The degree matrix $D$ is a diagonal matrix of size $n_V$ defined as
\begin{equation}
D\equiv {\rm diag}(\deg v_1,\deg v_2, \cdots, \deg v_{n_V})\,.
\label{eq:matrix D}
\end{equation}
Note that the diagonal part of the matrix $A^2$ is equal to $D$. 
In the following, we sometimes abbreviate $\deg v_i$ ($i=1,\cdots,n_V$) as 
\begin{equation}
  d_i \equiv \deg v_i\,. \quad (i=1,\cdots,n_V)
\end{equation}

Corresponding to the degree matrix $D$, we define the matrix $Q$ as 
\begin{equation}
Q\equiv 
D - \bsone_{n_V}\,,
\label{eq:matrix Q}
\end{equation}
and its parameter deformation by $u$ as 
\begin{equation}
Q_u \equiv 
(1-u)\left(D-(1-u)\bsone_{n_V}\right)\,.
\label{eq:matrix Qu}
\end{equation}
Note that $Q_{u=0} = Q$. 

The incidence matrix $L$ 
and the $(0,1)$-incidence matrix $\tL$ 
are $n_E\times n_V$ matrices whose elements are given by 
\begin{equation}
  L_{ev} \equiv 
  \begin{cases}
    1 & v = t(e) \\
    -1 & v = s(e) \\
    0 & {\rm otherwise}
  \end{cases}\,,
  \label{eq:incidence matrix}
\end{equation}
and 
\begin{equation}
  \tL_{ev} \equiv 
  \begin{cases}
    1 & v = s(e)\ {\rm or}\ v=t(e) \\
    0 & {\rm otherwise}
  \end{cases}\,,
  \label{eq:01incidence matrix}
\end{equation}
respectively, where $e\in E$ and $v\in V$. 
As seen from the definitions, the incidence matrix and the $(0,1)$-incidence matrix give relations between the edges and the vertices. 

An important matrix made of $A$ and $D$ is the graph Laplacian, 
\begin{equation}
  \Delta \equiv D-A\,,
  \label{eq:graph Laplacian}
\end{equation}
which is related to the incidence matrix as 
\begin{equation}
  \Delta = L^T L = D - A\,.
  \label{eq:incidence and Laplacian}
\end{equation}
For a later purpose, we also define 
\begin{equation}
  \tDelta \equiv \tL^T \tL = D + A\,.
  \label{eq:01incidence and DA}
\end{equation}

The edge adjacency matrix is a matrix of size $2n_E$ whose element $W_{\bse\bse'}$ ($\bse,\bse'\in E_D$) is either $1$ or $0$ corresponding to whether the conditions $t(\bse)=s(\bse')$ and $\bse'\ne\bse^{-1}$ are satisfied or not: 
\begin{align}
  W_{\bse\bse'} \equiv \begin{cases}
    1 & {\rm if}\ t(\bse) = s(\bse')\ {\rm and}\ \bse'^{-1}\ne \bse \\
    0 & {\rm others}
  \end{cases}\,.
  \quad \left(\bse,\bse'\in E_D\right)
  \label{eq:matrix W}
\end{align}
The bump matrix $J$ is a matrix of size $2n_E$ whose element $J_{\bse\bse'}$ ($\bse,\bse'\in E_D$) is either $1$ or $0$ corresponding to whether $\bse'$ is the inverse of $\bse$ or not: 
\begin{align}
  J_{\bse\bse'} \equiv \begin{cases}
    1 & {\rm if}\ \bse'^{-1}= \bse \\
    0 & {\rm others}
  \end{cases}\,.
  \quad \left(\bse,\bse'\in E_D\right)
  \label{eq:matrix J}
\end{align}
Similar to the degree matrix $D$,
we also define a diagonal degree matrix of size $2n_E$
associated with the above edge representation by
\be
\check{D} \equiv {\rm diag}\left(
  \deg s(\bse_1),\deg s(\bse_2), \cdots, \deg s(\bse_{2n_E})
\right)\,.
\ee
Using this, as an analog to \eqref{eq:matrix Qu},
we define a diagonal matrix of size $2n_E$ by
\begin{equation}
  \check{Q}_u \equiv (1-u)\left(\check{D}-(1-u)\bsone_{2n_E}\right)\,.
\end{equation}

\subsection{The Ihara and Bartholdi zeta functions}
\label{subsec:zeta functions}

The Bartholdi zeta function \cite{bartholdi2000counting} is defined by the Euler product over the representatives of primitive cycles,
\begin{equation}
  \zeta_G(q,u) \equiv \prod_{C\in [{\cal P}]} \frac{1}{1-q^{|C|} u^{b(C)}}\,,
  \label{eq:Bartholdi}
\end{equation}
where $|C|$ and $b(C)$ denote the length and the number of bumps of the cycle $C$, respectively. 
We can rewrite this expression as 
\begin{align}
  \zeta_G(q,u) = \exp\left(
  \sum_{C:\text{all cycles}}\frac{1}{|C|}q^{|C|}u^{b(C)}
  \right)\,,
  \label{eq:Bartholdi exp}
\end{align}
where the summation runs over all possible cycles of $G$. 
In order for the series expansion to converge, $|q|$ and $|u|$ must be sufficiently small. 

The Ihara zeta function \cite{Ihara:original,MR607504,sunada1986functions} is obtained from the Bartholdi zeta function by setting $u=0$, 
\begin{equation}
  \zeta_G(q)\equiv
 \zeta_G(q,u=0)
 = \prod_{C\in [{\cal P}_R]} \frac{1}{1-q^{|C|} }\,,
  \label{eq:Ihara}
\end{equation}
where the range of the product is reduced to the representatives of the primitive reduced cycles, 
which can be rewritten as 
\begin{align}
\zeta_G(q) = \exp\left(
\sum_{m=1}^\infty \frac{N_m}{m} q^{m}
\right)\,,
\end{align}
where $N_m$ stands for the number of reduced cycles of length $m$. 

Although the Euler product \eqref{eq:Bartholdi} (and \eqref{eq:Bartholdi exp}) converges only for sufficiently small $|q|$ and $|u|$, 
the Bartholdi zeta function (and the Ihara zeta function as well) is analytically connected to the whole complex plane in the form of the reciprocal of the polynomial of $q$ and $u$.
It is achieved by the fact that the Bartholdi zeta function \eqref{eq:Bartholdi} is expressed as the inverse of the determinant of a matrix\cite{bartholdi2000counting};
  \begin{equation}
    \zeta_G(q,u) 
    = \bigl(1-(1-u)^2q^2\bigr)^{-(n_E-n_V)}
    \det\bigl(\bsone_{n_V}-q A + q^2 Q_u \bigr)^{-1}\,,
    \label{eq:vertex Bartholdi}
  \end{equation}
  where $A$ and $Q_u$ are given by \eqref{eq:matrix A} and \eqref{eq:matrix Qu}, respectively. 
This is called the vertex expression of the Bartholdi zeta function. 
In particular, if $G$ is the regular graph, it reduces to
\begin{equation}
  \zeta_G(q,u) = \left(1-(1-u)^2q^2\right)^{-(n_E-n_V)}
    \det\left((1+(1-u)(t+u)q^2)\bsone_{n_V}-q A \right)^{-1}\,.
    \label{eq:vertex Bartholdi regular}
\end{equation}

By setting $u=0$, the Bartholdi zeta function reduces to
the vertex expression of the Ihara zeta function
  \begin{equation}
    \begin{split}
    \zeta_G(q) 
    &= \bigl(1-q^2\bigr)^{-(n_E-n_V)}
    \det\bigl(\bsone_{n_V}-q A + q^2 Q\bigr)^{-1} \,. 
    \end{split}
    \label{eq:vertex Ihara}
  \end{equation}

Apart from the vertex expression, another expression focusing on the relation among the edges of $G$ is known \cite{mizuno2003bartholdi};
  \begin{equation}
    \zeta_G(q,u) = \det \left(\bsone_{2n_E}- q(W+uJ)\right)^{-1}\,, 
    \label{eq:edge Bartholdi}
  \end{equation}
  where $W$ and $J$ are the edge adjacency matrix \eqref{eq:matrix W} and the bump matrix \eqref{eq:matrix J}, respectively. 
This expression is called the Hashimoto expression \cite{Hashimoto1990ONZA} or the edge expression of the Bartholdi zeta function. 
Again, by setting $u=0$, we obtain the edge expression of the Ihara zeta function.
  The Ihara zeta function can be expressed through the edge adjacency matrix as 
  \begin{equation}
    \zeta_G(q) = \det \left(\bsone_{2n_E}- q W\right)^{-1}\,.
    \label{eq:edge Ihara}
  \end{equation}
Note that the equivalence between the vertex expression \eqref{eq:vertex Ihara} and the edge expression \eqref{eq:edge Ihara} was shown in a clear manner in \cite{bass1992ihara}.

\section{Functional equations for the Bartholdi zeta function}
\label{sec:functional equations}

In this section, we discuss functional equations
that relate the Bartholdi zeta function $\zeta_G(q,u)$ with different values of $q$ and $u$ to each other.

\subsection{The functional equation with respect to $q$} 
\label{subsec:duality on irregular graph}

For the following convenience, we first introduce 
\begin{equation}
  t_{v} \equiv \deg v-1\,. \quad (v\in V)
\end{equation}
Then, let us start with the following lemma: 

\begin{lemma}
\label{lem:general duality vertex}
If $u\ne 1$ nor $u\ne -t_v$ for ${}^\forall v\in V$, namely, the matrix $Q_u$ is invertible, 
the Bartholdi zeta function on a general graph $G$ has the following expression for $c\ne 0$, 
\begin{align}
  \zeta_G(1/cq,u) 
  = 
  \frac{1}{\prod_{v\in V}(1-u)(t_v+u)} 
  &\frac{(c^2q^2)^{n_E}}{(c^2q^2-(1-u)^2)^{n_E-n_V}} \nn \\
  &\times \det\left(
  \bsone_{n_V} -cq Q_u^{-1} A + c^2 q^2 Q_u^{-1}
  \right)^{-1}\,.
  \label{eq:vertex Bartholdi dual}
\end{align}
\end{lemma}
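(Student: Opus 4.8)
The plan is to take the vertex expression \eqref{eq:vertex Bartholdi} as the starting point, substitute $q\mapsto 1/(cq)$, and then reorganize the resulting rational function into the claimed form. Since \eqref{eq:vertex Bartholdi} continues $\zeta_G$ to the whole $(q,u)$ plane as the reciprocal of a polynomial, the substitution is legitimate away from the poles, and the identity we obtain is an identity of rational functions.

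First I would write down
\begin{equation*}
\zeta_G(1/cq,u) = \bigl(1-(1-u)^2/(c^2q^2)\bigr)^{-(n_E-n_V)} \det\bigl(\bsone_{n_V}-\tfrac{1}{cq} A + \tfrac{1}{c^2 q^2} Q_u \bigr)^{-1}\,,
\end{equation*}
and clear denominators in both factors. Writing $1-(1-u)^2/(c^2q^2)$ over the common denominator $c^2q^2$ turns the scalar prefactor into $(c^2q^2)^{n_E-n_V}\bigl(c^2q^2-(1-u)^2\bigr)^{-(n_E-n_V)}$. For the determinant I would factor the scalar $1/(c^2q^2)$ out of the matrix, using
\begin{equation*}
\bsone_{n_V}-\tfrac{1}{cq} A + \tfrac{1}{c^2 q^2} Q_u = \tfrac{1}{c^2q^2}\bigl(c^2q^2\,\bsone_{n_V} - cq\,A + Q_u\bigr)\,,
\end{equation*}
so that $\det(\lambda M)=\lambda^{n_V}\det M$ produces a factor $(c^2q^2)^{n_V}$ upon inversion, leaving the matrix $c^2q^2\,\bsone_{n_V}-cq\,A+Q_u$ inside the determinant.

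Next I would factor $Q_u$ out of this determinant. Because the hypothesis $u\ne 1$ and $u\ne -t_v$ makes $Q_u$ invertible, I can write
\begin{equation*}
c^2q^2\,\bsone_{n_V}-cq\, A + Q_u = Q_u\bigl(\bsone_{n_V} - cq\,Q_u^{-1} A + c^2q^2\, Q_u^{-1}\bigr)\,,
\end{equation*}
which splits off a factor $\det Q_u$. Since $Q_u=(1-u)\bigl(D-(1-u)\bsone_{n_V}\bigr)$ is diagonal with entries $(1-u)\bigl(\deg v-(1-u)\bigr)=(1-u)(t_v+u)$, one gets $\det Q_u=\prod_{v\in V}(1-u)(t_v+u)$. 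Collecting the three scalar contributions — the prefactor, the $(c^2q^2)^{n_V}$ from the row rescaling, and the reciprocal of $\det Q_u$ — and combining powers via $(c^2q^2)^{n_E-n_V}(c^2q^2)^{n_V}=(c^2q^2)^{n_E}$ reproduces exactly \eqref{eq:vertex Bartholdi dual}.

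There is no genuine analytic obstacle here; the argument is a short chain of elementary determinant identities. The only point requiring care is the bookkeeping of the powers of $c^2q^2$, which arise separately from the prefactor and from the row rescaling of the determinant, and the verification that the diagonal entries of $Q_u$ are $(1-u)(t_v+u)$ rather than an off-by-one variant — this is precisely where the definition $t_v=\deg v-1$ enters. The invertibility hypothesis is exactly what is needed both to justify factoring $Q_u$ out of the determinant and to guarantee $\det Q_u\ne 0$, so that the final expression is well defined.
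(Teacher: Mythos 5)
Your proposal is correct and follows essentially the same route as the paper's proof: substitute $q\mapsto 1/(cq)$ into the vertex expression \eqref{eq:vertex Bartholdi}, extract the scalar $(cq)^{2n_V}$ and the factor $\det Q_u=\prod_{v\in V}(1-u)(t_v+u)$ from the determinant, and combine the powers of $c^2q^2$. The bookkeeping you describe matches the paper's one-line computation exactly.
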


\begin{proof}
We can prove it by a straightforward calculation 
from the expression \eqref{eq:vertex Bartholdi};
\begin{align*}
  \zeta_G&(1/cq,u) \nn \\
  &=
  \bigl(\bsone_{n_V}-(1-u)^2(cq)^{-2}\bigr)^{-(n_E-n_V)}
  \det\bigl(\bsone_{n_V}-(cq)^{-1} A + (cq)^{-2} Q_u \bigr)^{-1} \nn \\
  &= 
  \mathscale{0.98}{
  \bigl(\bsone_{n_V}-(1-u)^2(cq)^{-2}\bigr)^{-(n_E-n_V)}
  (cq)^{2n_V} \det(Q_u)^{-1}
  \det\left(
  (cq)^2 Q_u^{-1} - cq Q_u^{-1}A + \bsone_{n_V}
  \right)^{-1}} \nn \\
  &=
  \frac{1}{\prod_{v\in V}(1-u)(t_v+u)} 
  \frac{(c^2q^2)^{n_E}}{(c^2q^2-(1-u)^2)^{n_E-n_V}}
  \det\left(
  \bsone_{n_V}-c q Q_u^{-1} A + c^2 q^2 Q_u^{-1}
  \right)^{-1}\,.
\end{align*}
This is the expression \eqref{eq:vertex Bartholdi dual}. 
\end{proof}

Let now $G$ be a $(t+1)$-regular graph, namely, each vertex has the same degree $t+1$.
By setting $c=(1-u)(t+u)$ in \eqref{eq:vertex Bartholdi dual}, we obtain the functional equation of the Bartholdi zeta function \cite{guido2008bartholdi}:
\begin{theorem}[Guido-Isola-Lapidus]
\label{th:funcional eq Bartholdi} 
Suppose $G$ is a $(t+1)$-regular graph and $u\ne 1,-t$. 
Then, $\zeta_G(q,u)$ satisfies the identity, 
as
\begin{align}
  \zeta_G&(1/(1-u)(t+u)q,u)  \nn \\
  &= 
  (-1)^{n_E-n_V}(1-u)^{n_V}(t+u)^{2n_E-n_V}
  q^{2n_E}
  \left(\frac{1-(1-u)^2q^2}{1-(t+u)^2q^2}\right)^{n_E-n_V}
  \zeta_G(q,u)\,.
  \label{eq:functional equation Bartholdi}
\end{align}
\end{theorem}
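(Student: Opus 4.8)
The plan is to derive the functional equation directly from Lemma \ref{lem:general duality vertex} by specializing to the regular case and choosing the scaling constant $c$ appropriately, so no new machinery is required. First I would observe that when $G$ is $(t+1)$-regular we have $t_v=t$ for every $v\in V$ and $D=(t+1)\bsone_{n_V}$, so that the matrix $Q_u$ degenerates to a scalar multiple of the identity, $Q_u=(1-u)(t+u)\bsone_{n_V}$. Consequently $Q_u^{-1}=\bigl((1-u)(t+u)\bigr)^{-1}\bsone_{n_V}$, and the product $\prod_{v\in V}(1-u)(t_v+u)$ appearing in \eqref{eq:vertex Bartholdi dual} collapses to $\bigl((1-u)(t+u)\bigr)^{n_V}$.

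The decisive step is to set $c=(1-u)(t+u)$ in \eqref{eq:vertex Bartholdi dual}. With this choice the cross term becomes $cq\,Q_u^{-1}A=qA$ and the quadratic term becomes $c^2q^2\,Q_u^{-1}=(1-u)(t+u)q^2\,\bsone_{n_V}$, so the determinant inside the lemma reduces to $\det\bigl((1+(1-u)(t+u)q^2)\bsone_{n_V}-qA\bigr)$. This is precisely the determinant appearing in the regular-graph vertex expression \eqref{eq:vertex Bartholdi regular}, so I can trade it for $(1-(1-u)^2q^2)^{n_E-n_V}\zeta_G(q,u)$ and thereby re-express the left-hand side $\zeta_G(1/(1-u)(t+u)q,u)$ in terms of $\zeta_G(q,u)$ itself.

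What remains is purely algebraic bookkeeping of the scalar prefactors. I would combine $1/c^{n_V}$ with $(c^2q^2)^{n_E}$ to get $c^{2n_E-n_V}q^{2n_E}$, and rewrite the denominator using the identity $c^2q^2-(1-u)^2=-(1-u)^2\bigl(1-(t+u)^2q^2\bigr)$, which simultaneously produces the sign $(-1)^{n_E-n_V}$ and the ratio $\bigl(\tfrac{1-(1-u)^2q^2}{1-(t+u)^2q^2}\bigr)^{n_E-n_V}$. Expanding $c^{2n_E-n_V}=(1-u)^{2n_E-n_V}(t+u)^{2n_E-n_V}$ and cancelling against the $(1-u)^{-2(n_E-n_V)}$ generated by the denominator leaves exactly $(1-u)^{n_V}(t+u)^{2n_E-n_V}$, which assembles into \eqref{eq:functional equation Bartholdi}.

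There is no conceptual obstacle here; the entire argument is a single substitution followed by simplification, and the essential observation is merely that $Q_u$ becomes scalar in the regular case. The only point requiring care is the exponent and sign accounting, in particular tracking how the factor $(1-u)^{2(n_E-n_V)}$ extracted from $c^2q^2-(1-u)^2$ combines with the powers of $1-u$ inside $c^{2n_E-n_V}$ to yield the net exponent $n_V$. Provided one keeps these exponents straight, the identity follows immediately.
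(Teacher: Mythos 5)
Your proposal is correct and follows exactly the paper's own proof, which consists of setting $c=(1-u)(t+u)$ in Lemma \ref{lem:general duality vertex} and simplifying; you have merely spelled out the bookkeeping (the scalar degeneration of $Q_u$, the factorization $c^2q^2-(1-u)^2=-(1-u)^2(1-(t+u)^2q^2)$, and the exponent cancellation) that the paper leaves implicit, and all of it checks out.
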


\begin{proof}
  We can prove it by simply setting $c=(1-u)(t+u)$ in \eqref{eq:vertex Bartholdi dual}.
\end{proof}

The functional equation for the Ihara zeta function is obtained as a corollary of this theorem:
\begin{corollary}
\label{col:funcional eq Ihara}
The Ihara zeta function on a $(t+1)$-regular graph satisfies the identity, 
\begin{align}
  \zeta_G&(1/tq)  
  = 
  (-1)^{n_E-n_V} t^{2n_E-n_V} q^{2n_E}
  \left(\frac{1-q^2}{1-t^2q^2}\right)^{n_E-n_V}
  \zeta_G(q)\,.
  \label{eq:functional equation Ihara}
\end{align}
\end{corollary}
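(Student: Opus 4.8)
The plan is to obtain this identity as the $u=0$ specialization of the Bartholdi functional equation, Theorem \ref{th:funcional eq Bartholdi}. The starting point is the defining relation $\zeta_G(q)=\zeta_G(q,0)$ from \eqref{eq:Ihara}, which lets me read off both the Ihara zeta function and its argument-transformed counterpart directly from the Bartholdi side once the bump parameter is fixed to zero. So the entire content of the corollary is already encoded in \eqref{eq:functional equation Bartholdi}, and the work reduces to a careful substitution.

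Before substituting I would verify that $u=0$ lies in the admissible range of Theorem \ref{th:funcional eq Bartholdi}, whose hypotheses are $u\ne 1$ and $u\ne -t$. The condition $u\ne 1$ is immediate, while $u\ne -t$ reduces to $t\ne 0$. For a connected $(t+1)$-regular simple graph possessing at least one cycle this is automatic; the degenerate case $t=0$ corresponds to a single-edge tree whose Ihara zeta function is trivial and which carries no cycles, so it may be excluded without loss.

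With the hypotheses in place, the proof is a term-by-term substitution of $u=0$ into \eqref{eq:functional equation Bartholdi}. The combination $(1-u)(t+u)$ collapses to $t$, so the left-hand argument $1/(1-u)(t+u)q$ becomes $1/tq$; the prefactors simplify as $(1-u)^{n_V}\to 1$ and $(t+u)^{2n_E-n_V}\to t^{2n_E-n_V}$; the rational factor $\frac{1-(1-u)^2q^2}{1-(t+u)^2q^2}$ becomes $\frac{1-q^2}{1-t^2q^2}$; and $\zeta_G(q,0)$ on both sides returns $\zeta_G(q)$. Assembling these factors yields exactly \eqref{eq:functional equation Ihara}. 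There is no genuine obstacle here beyond the hypothesis check noted above: the corollary is a clean restriction of a previously established identity, and the only care required is to confirm that the specialization $u=0$ does not fall on the excluded values of the bump parameter.
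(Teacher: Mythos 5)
Your proof is correct and follows exactly the paper's intended route: the corollary is obtained by setting $u=0$ in the Bartholdi functional equation \eqref{eq:functional equation Bartholdi}, and your simplifications of the prefactors and the rational factor are all accurate. The extra check that $u=0$ avoids the excluded values $u=1,-t$ is a sensible addition but does not change the argument.
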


If we define the completed Bartholdi zeta function, 
\begin{equation}
  \xi_G(q,u) \equiv 
  \left(1-(1-u)^2q^2\right)^{n_E-\frac{n_V}{2}}
  \left(1-(t+u)^2q^2\right)^{\frac{n_V}{2}}\zeta_G(q,u)\,,
  \label{eq:completed Bartholdi}
\end{equation}
the functional equation \eqref{eq:functional equation Bartholdi} is expressed in a simpler form, 
\begin{equation}
  \xi_G(1/(1-u)(t+u)q,u) 
  = 
  (-1)^{n_V} \xi_G(q,u)\,.
  \label{eq:functional equation completed Bartholdi}
\end{equation}

If we introduce a parameter $s$ through the relation,
\begin{equation}
  q \equiv |1-u|^{s-1}|t+u|^{-s}
  \qquad \left(\re u \ne 1, -t\right)\,,
  \label{eq:q to s regular}
\end{equation}
the transformation $q\to 1/(1-u)(t+u)q$ is equivalent to $s\to 1-s$, that is, a fold around $\re(s)=1/2$ 
and the functional relation \eqref{eq:functional equation completed Bartholdi} can be written as 
\begin{equation}
  \xi_G(1-s,u) = (-1)^{n_V} \xi_G(s,u)\,.
\end{equation}
This is nothing but an analog of the symmetric functional equation of the Riemann zeta function
up to the sign, 
\begin{equation}
\xi(s)=\xi(1-s)\, ,
\label{eq:symmetric functional equation}
\end{equation}
where $\xi(s)$ is the completed Riemann zeta function, 
\begin{equation}
\xi(s) \equiv \frac{1}{2}\pi^{-\frac{s}{2}}s(s-1)\Gamma\left(
\frac{s}{2}
\right)\zeta(s)\,,
\end{equation}
defined from the Riemann zeta function,
\begin{equation}
  \zeta(s) = \sum_{n=1}^\infty \frac{1}{n^s}\,. 
  \label{eq:Riemann zeta}
  \end{equation}

\subsection{\boldmath The functional equation with respect to the bump parameter $u$}
\label{subsec:u-duality}

We next consider the functional equation of the Bartholdi zeta function with respect to the the bump parameter $u$. 
From the vertex expression of the Bartholdi zeta function on the $(t+1)$-regular graph \eqref{eq:vertex Bartholdi regular}, 
we obtain
\begin{align*}
  \zeta_G(q,1-t-u) 
  &= 
  \left(1-(t+u)^2q^2\right)^{-n_E+n_V}\det\left((1+t+u)(1-u)q^2\bsone_{n_V}-q A \right)^{-1}\,,
\end{align*}
which leads the functional relation with respect to $u$: 
\begin{theorem}
\label{th:funcional eq for u}
Suppose that $G$ is a $(t+1)$-regular graph. 
Then, the Bartholdi zeta function satisfies the identity, 
\begin{equation}
 \zeta_G(q,1-t-u) 
 = \left(\frac{1-(1-u)^2q^2}{1-(t+u)^2q^2}\right)^{n_E-n_V}
   \zeta_G(q,u)\,.
   \label{eq:functional equation Bartholdi wrt u}
\end{equation}
Equivalently, the completed Bartholdi zeta function satisfies 
\begin{equation}
  \xi_G(q,1-t-u) = \xi_G(q,u)\,.
  \label{eq:functional equation completed Bartholdi wrt u}
\end{equation}
\end{theorem}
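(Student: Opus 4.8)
The plan is to read the identity directly off the vertex expression of the Bartholdi zeta function on a regular graph, \eqref{eq:vertex Bartholdi regular}. The crucial observation is that the substitution $u \mapsto 1-t-u$ is precisely the involution that swaps the two linear factors $(1-u)$ and $(t+u)$: writing $u' = 1-t-u$, one checks at once that $1-u' = t+u$ and $t+u' = 1-u$. Consequently the \emph{product} $(1-u)(t+u)$ is invariant under this swap, while the individual squares $(1-u)^2$ and $(t+u)^2$ are merely interchanged. Recognizing this symmetry is really the whole content of the theorem; everything else is bookkeeping.

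With this in hand I would substitute $u' = 1-t-u$ into \eqref{eq:vertex Bartholdi regular}. The determinant factor $\det\bigl((1+(1-u)(t+u)q^2)\bsone_{n_V}-qA\bigr)^{-1}$ depends on $u$ only through the symmetric combination $(1-u)(t+u)$, so it is left \emph{unchanged} by the substitution. Only the prefactor transforms, with $\bigl(1-(1-u)^2q^2\bigr)^{-(n_E-n_V)}$ turning into $\bigl(1-(t+u)^2q^2\bigr)^{-(n_E-n_V)}$. Forming the quotient $\zeta_G(q,1-t-u)/\zeta_G(q,u)$ then cancels the common determinant and leaves exactly the ratio of prefactors $\bigl(\tfrac{1-(1-u)^2q^2}{1-(t+u)^2q^2}\bigr)^{n_E-n_V}$, which is the claimed relation \eqref{eq:functional equation Bartholdi wrt u}.

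For the completed form \eqref{eq:functional equation completed Bartholdi wrt u}, the cleanest route is to note that $\xi_G$ is \emph{manifestly} symmetric under the swap. Substituting \eqref{eq:vertex Bartholdi regular} into the definition \eqref{eq:completed Bartholdi} and combining the exponents of the first prefactor collapses the weight to $\bigl(1-(1-u)^2q^2\bigr)^{n_V/2}\bigl(1-(t+u)^2q^2\bigr)^{n_V/2}$ multiplying the symmetric determinant, so $\xi_G(q,u)$ is visibly invariant under $(1-u)\leftrightarrow(t+u)$, giving $\xi_G(q,1-t-u)=\xi_G(q,u)$. Alternatively one simply feeds \eqref{eq:functional equation Bartholdi wrt u} back into \eqref{eq:completed Bartholdi} and checks that the exponents recombine to the symmetric $n_V/2$.

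Since every step is an elementary substitution into an already-established closed form, there is essentially no obstacle here; the only points requiring minor care are the exponent arithmetic in the completed case and the standing assumption $u \ne 1,-t$ (equivalently $u'\ne 1,-t$), which guarantees that the prefactors and the vertex expression are valid throughout.
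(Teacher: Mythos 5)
Your proof is correct and is essentially identical to the paper's: the paper likewise substitutes $u\mapsto 1-t-u$ into the vertex expression \eqref{eq:vertex Bartholdi regular}, notes that the determinant depends on $u$ only through the invariant combination $(1-u)(t+u)$, and reads off the ratio of prefactors, with the completed form following from the exponent arithmetic you describe. (Your closing caveat $u\ne 1,-t$ is not actually needed here, since \eqref{eq:vertex Bartholdi regular} is a polynomial identity valid for all $u$, but this is harmless.)
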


In particular, it brings a non-trivial equivalence between the Ihara zeta function and the Bartholdi zeta function
by setting $u=0$ in \eqref{eq:functional equation Bartholdi wrt u}:
\begin{corollary}
\label{col:Ihara-Bartholdi duality}
The Ihara zeta function on a $(t+1)$-regular graph has the following relation to the Bartholdi zeta function with the bump parameter $u=1-t$;
\begin{equation}
  \zeta_G(q) = 
  \left(\frac{1-t^2q^2}{1-q^2}\right)^{n_E-n_V}
  \zeta_G(q,1-t)\,.
  \label{eq:Ihara-Bartholdi equivalence}
\end{equation}
\end{corollary}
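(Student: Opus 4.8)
The plan is to derive the corollary as the $u=0$ specialization of Theorem \ref{th:funcional eq for u}. First I would set $u=0$ in the functional equation \eqref{eq:functional equation Bartholdi wrt u}. The left-hand side becomes $\zeta_G(q,1-t)$ because the bump parameter $1-t-u$ collapses to $1-t$, while on the right-hand side the substitution sends $(1-u)^2\mapsto 1$ and $(t+u)^2\mapsto t^2$, producing the prefactor $\left(\frac{1-q^2}{1-t^2q^2}\right)^{n_E-n_V}$ multiplying $\zeta_G(q,0)$.

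Next I would invoke the definition \eqref{eq:Ihara}, which identifies $\zeta_G(q)=\zeta_G(q,u=0)$, to rewrite the right-hand factor in terms of the Ihara zeta function. The relation then reads
\[
\zeta_G(q,1-t) = \left(\frac{1-q^2}{1-t^2q^2}\right)^{n_E-n_V}\zeta_G(q)\,,
\]
and the final step is to solve for $\zeta_G(q)$ by multiplying both sides by the reciprocal prefactor $\left(\frac{1-t^2q^2}{1-q^2}\right)^{n_E-n_V}$, which gives exactly \eqref{eq:Ihara-Bartholdi equivalence}. Because both sides are rational functions of $q$, this is an identity on the whole complex plane and no convergence concerns enter.

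Since the whole argument reduces to a single substitution, I do not expect a genuine obstacle; the only point needing a moment's care is that the prefactor inverts cleanly as a ratio of nonzero polynomials, which it does. As an independent check I would instead compare the vertex expressions directly: putting $u=1-t$ in \eqref{eq:vertex Bartholdi regular} uses $1-u=t$ and $(1-u)(t+u)=t$, so its determinant factor is $\det\left((1+tq^2)\bsone_{n_V}-qA\right)^{-1}$; on the other hand, since $Q=t\,\bsone_{n_V}$ on a $(t+1)$-regular graph, the Ihara vertex expression \eqref{eq:vertex Ihara} carries the identical determinant factor. The two functions therefore differ only through the scalar prefactors $\left(1-t^2q^2\right)^{-(n_E-n_V)}$ and $\left(1-q^2\right)^{-(n_E-n_V)}$, whose ratio reproduces \eqref{eq:Ihara-Bartholdi equivalence} and confirms the claim.
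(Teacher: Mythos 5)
Your proposal is correct and follows exactly the paper's route: the corollary is obtained by setting $u=0$ in the functional equation \eqref{eq:functional equation Bartholdi wrt u} and inverting the prefactor. Your secondary check via the vertex expressions is also sound (it essentially re-derives the underlying Theorem \ref{th:funcional eq for u} in the special case $u=1-t$), but it adds nothing beyond the one-line substitution the paper intends.
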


\section{Poles of the Bartholdi zeta function}
\label{sec:poles}

In this section, we consider pole structure of the Bartholdi zeta function.

\subsection{Critical strip of the Bartholdi zeta function}
\label{subsec:critical strip}


We define the minimal and maximal magnitudes of the poles of the Bartholdi zeta function $\zeta_G(q,u)$ with respect to $q$ as 
$r_G$ and $r'_G$ ($0<r_G\le r'_G$), respectively. 
To this end, we define 
\begin{equation}
  B_u \equiv W + uJ\,,
  \label{eq:matrix Bu}
\end{equation}
where $W$ and $J$ are the edge adjacency matrix and the bump matrix defined in \eqref{eq:matrix W} and \eqref{eq:matrix J}, respectively.
We also define 
\begin{equation}
  \{ t_v | v \in V\} 
  = 
  \{ \tlt_1,\cdots,\tlt_M\}\,, \quad  (\tlt_1 < \cdots < \tlt_M)
\end{equation}
where $M$ is the number of the different vertex degrees of $G$. 
The following proposition determines lower and upper bounds of $r_G$ and $r'_G$, respectively. 

\begin{prop}
\label{prop:range of poles}
$r_G$ and $r'_G$ satisfy the bounds,
\begin{equation}
  R_{\rm min} \le r_G \le r'_G \le R_{\rm max}\,,
\end{equation}
where 
\begin{align}
  R_{\rm min} = \begin{cases}
    |u+\tlt_M|^{-1} & \left( -\frac{\tlt_M - 1}{2} \le \re u \right)\\
    |u-1|^{-1} &  \left(\re u < -\frac{\tlt_M - 1}{2}\right) 
  \end{cases}\,,
  \label{eq:target of proof 1}
\end{align}
and 
\begin{align}
  R_{\rm max} = \begin{cases}
    |u-1|^{-1} &  \left( -\frac{\tlt_1-1}{2} \le \re u \right) \\
    |u+\tlt_i|^{-1} &  \left(-\frac{\tlt_i+\tlt_{i+1}}{2}\le \re u < -\frac{\tlt_{i-1}+\tlt_i}{2}\  \text{for}\ i=1,\cdots,M-1\right) \\
    |u+\tlt_M|^{-1} &  \left(\re u < - \frac{\tlt_{M-1}  + \tlt_M}{2}\right)
  \end{cases}\,.
  \label{eq:target of proof 2}
\end{align}
\end{prop}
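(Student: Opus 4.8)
The plan is to translate the statement into a localization result for the spectrum of the edge operator $B_u=W+uJ$. By the edge expression \eqref{eq:edge Bartholdi}, the poles of $\zeta_G(q,u)$ are exactly the reciprocals $q=1/\lambda$ of the nonzero eigenvalues $\lambda$ of $B_u$, so that $r_G=1/\max_\lambda|\lambda|$ and $r'_G=1/\min_\lambda|\lambda|$. Hence the two inequalities to be proven are equivalent to the single spectral statement that every eigenvalue of $B_u$ satisfies
\[
\min\bigl(|u-1|,\ \min_j|u+\tlt_j|\bigr)\ \le\ |\lambda|\ \le\ \max\bigl(|u-1|,\ \max_j|u+\tlt_j|\bigr),
\]
the minima/maxima being over the distinct degree classes $j$. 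From this the piecewise formulas \eqref{eq:target of proof 1} and \eqref{eq:target of proof 2} would follow by the elementary comparison of $|u-1|$ and $|u+\tlt_j|$ as functions of $\re u$: the breakpoints $\re u=-(\tlt_M-1)/2$ and $\re u=-(\tlt_{i-1}+\tlt_i)/2$ are precisely where two of these moduli coincide.

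First I would reduce the $2n_E$-dimensional edge problem to the vertex side. Equating \eqref{eq:vertex Bartholdi} and \eqref{eq:edge Bartholdi} at $q=1/\lambda$ yields the Bass--Bartholdi identity
\[
\det(\lambda\bsone_{2n_E}-B_u)=(\lambda^2-(1-u)^2)^{n_E-n_V}\,\det\bigl(\lambda^2\bsone_{n_V}-\lambda A+Q_u\bigr),
\]
so the spectrum of $B_u$ consists of $\lambda=\pm(1-u)$ (moduli $|u-1|$, accounting for the prefactor poles at $q=\pm(1-u)^{-1}$) together with the roots of the quadratic pencil $\det(\lambda^2\bsone_{n_V}-\lambda A+Q_u)=0$.

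The heart of the argument is the treatment of the pencil roots, where the decisive structural input is the symmetry of $A$ and $D$. For an eigenvector $y$ of the pencil I would take the Rayleigh quotient: writing $\eta=y^\dagger Ay/\|y\|^2$ and $\tau=y^\dagger Dy/\|y\|^2-1$, and using $Q_u=(1-u)(D-(1-u)\bsone_{n_V})$, the pencil equation collapses to the scalar quadratic
\[
\lambda^2-\eta\lambda+(1-u)(\tau+u)=0.
\]
Because $A$ and $D$ are real symmetric, $\eta$ is real and $\tau\in[\tlt_1,\tlt_M]$; moreover $D\pm A\succeq0$ (equivalently $\Delta,\tDelta\succeq0$ from \eqref{eq:incidence and Laplacian}--\eqref{eq:01incidence and DA}) gives $|\eta|\le\tau+1$. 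The key lemma I would then prove is purely algebraic: if $\alpha+\beta$ is real and positive and $\eta\in\R$ with $|\eta|\le\alpha+\beta$, then the roots of $\lambda^2-\eta\lambda+\alpha\beta$ lie in the annulus $\min(|\alpha|,|\beta|)\le|\lambda|\le\max(|\alpha|,|\beta|)$; applied with $\alpha=1-u$, $\beta=\tau+u$ (so $\alpha+\beta=\tau+1$) it localizes each pencil root to $\min(|1-u|,|\tau+u|)\le|\lambda|\le\max(|1-u|,|\tau+u|)$. The reality of $\eta$ is essential here: the cruder estimate $|\lambda^2+(1-u)(\tau+u)|\le(\tau+1)|\lambda|$ coming from a bare triangle/maximal-coordinate bound admits spurious (large or small) imaginary $\lambda$ and does \emph{not} give the sharp annulus.

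Finally I would optimize over the admissible $\tau$. For the outer bound the maximum of $|\tau+u|$ on $[\tlt_1,\tlt_M]$ is attained at the endpoint $\tlt_M$, giving $\max|\lambda|\le\max(|u-1|,|u+\tlt_M|)$, which is exactly $R_{\rm min}^{-1}$. The genuinely delicate part, which I expect to be the main obstacle, is the inner bound: the scalar Rayleigh reduction by itself only yields $\min|\lambda|\ge\min\bigl(|u-1|,\min_{\tau\in[\tlt_1,\tlt_M]}|\tau+u|\bigr)$, and minimizing over the \emph{continuous} interval is too weak, since it can place $\tau=-\re u$ in the interior and drive the bound down to $|\im u|$. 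To recover the discrete value $|u+\tlt_i|$ with $\tlt_i$ the degree nearest $-\re u$, one must show that an eigenvalue of minimal modulus cannot come from such an interior, mixed-degree Rayleigh profile. I would argue this by returning to the full pencil (or to $B_u$ directly) and showing that a vanishingly small $\lambda$ forces the eigenvector to concentrate on vertices of a single degree class, so that the diagonal entries $(Q_u)_{vv}=(1-u)(t_v+u)$ govern the smallest root and $\tau$ is effectively pinned to an actual degree. Making this concentration quantitative is the crux of the proof.
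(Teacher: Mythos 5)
Your reduction to a spectral localization for $B_u=W+uJ$ and your treatment of the outer bound are sound, but the proposal has a genuine gap exactly where you flag it: the inner bound. Passing to the vertex pencil and taking Rayleigh quotients $\eta=y^\dagger Ay/\|y\|^2$, $\tau=y^\dagger Dy/\|y\|^2-1$ only constrains $\tau$ to the \emph{continuous} interval $[\tlt_1,\tlt_M]$, so the best you can extract is $\min|\lambda|\ge\min\bigl(|u-1|,\min_{\tau\in[\tlt_1,\tlt_M]}|\tau+u|\bigr)$; for real $u$ with $-u$ strictly between two consecutive degrees this right-hand side is $0$ (the inner minimum is attained at $\tau=-u$), which is vacuous and strictly weaker than the claimed $R_{\rm max}^{-1}=|u+\tlt_i|>0$ with $\tlt_i$ the nearest actual degree. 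Your proposed repair --- that a vanishingly small eigenvalue forces the eigenvector to concentrate on a single degree class --- is not substantiated, and it is not the mechanism that makes the statement true; nothing in the pencil obviously forces such concentration. In addition, your ``key algebraic lemma'' on the roots of $\lambda^2-\eta\lambda+\alpha\beta$ for complex $\alpha,\beta$ with real positive sum is asserted without proof and is itself nontrivial.

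The paper closes this gap by never leaving the edge space: it computes $B_uB_u^\dagger$ explicitly and finds it block diagonal over the sets $E_v$ of edges with common target, each block $F_i$ being a rank-one perturbation of $|1-u|^2\bsone_{d_i}$. Hence the singular values of $B_u$ form the \emph{discrete} set $S=\{|d_1+u-1|,\dots,|d_{n_V}+u-1|\}\cup\{|1-u|\}$ (the latter with multiplicity $2n_E-n_V$), and the elementary bounds $\sigma_{\min}(B_u)\le|\lambda|\le\sigma_{\max}(B_u)$ for every eigenvalue $\lambda$ immediately give the piecewise formulas: the discreteness of $S$ is what pins the minimizer to an actual degree, with the breakpoints $\re u=-(\tlt_{i-1}+\tlt_i)/2$ arising from comparing $|\re (u)+x|$ over the finitely many values $x\in\{-1,\tlt_1,\dots,\tlt_M\}$. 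If you want to salvage the vertex-side route you would need a substitute for this singular-value computation; the Rayleigh-quotient reduction alone cannot produce the discrete minimum.
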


\begin{proof}
Since the Bartholdi zeta function is expressed as \eqref{eq:edge Bartholdi}, 
the poles with respect to $q$ are the inverse of the eigenvalues of the matrix $B_u=W+uJ$. 
To examine the range of the eigenvalues of $B_u$, 
it is useful to compute $B_u B_u^\dagger$, 
\begin{align}
  (B_u B_u^\dagger)_{\bse\bse'} = \left(\deg t(\bse) -2 (1-\re(u))\right) \delta_{t(\bse),t(\bse')}
  + |1-u|^2\delta_{\bse,\bse'}\,.
  \label{eq:BBT}
\end{align}
We then define a set of edges whose target is $v$, 
\begin{equation}
  E_{v} \equiv \{\bse\in E_D \;|\; t(\bse)=v\}\,.
\end{equation}
By definition, the set $E_D$ can be expressed as a direct sum of $E_v$ as 
\begin{equation}
  E_D = \bigoplus_{v\in V} E_v\,.
\end{equation}
From \eqref{eq:BBT}, the matrix $B_uB_u^T$ is expressed as a block diagonal matrix as 
\begin{equation}
  B_uB_u^\dagger = \begin{pmatrix}
    F_1 & & \\
    & \ddots & \\
    & & F_{n_V}
  \end{pmatrix}\,, 
  \label{eq:BBdagger}
\end{equation}
where the matrix $F_i$ ($i=1,\cdots,n_V$) is defined by 
\begin{equation}
  F_i \equiv (d_i-2(1-\re(u)))\begin{pmatrix}
    1 & \cdots & 1 \\
    \vdots & \ddots & \vdots \\
    1 & \cdots & 1 
  \end{pmatrix}
  + |1-u|^2 \bsone_{d_i}\,. 
\end{equation}
Since it is easy to show 
\begin{equation}
  \Spec F_i = \left\{|u+d_i-1|^2,|1-u|^2,\cdots,|1-u|^2\right\}\,,
  \label{eq:Spec F}
\end{equation}
the singular values of $B_u$ is given by 
\begin{equation}
  S \equiv \{
  |d_1 + u - 1|, 
  \cdots,
  |d_{n_V} + u - 1|, 
  \overbrace{|1-u|,\cdots,|1-u|}^{2n_{E}-n_V}
  \}\,.
\end{equation}

Using the standard discussion using the Rayleigh quotient, we see that any eigenvalue of $B_u$ satisfies 
\begin{equation}
  \min(S) \le |\lambda| \le \max(S)\,. 
\end{equation}
Since the relation of the magnitudes $|u+x_1|$ and $|u+x_2|$ ($x_1,x_2\in \R$) is the same as that of $|\re(u)+x_1|$ and $|\re(u)+x_2|$, we see 
\begin{align*}
\max(S) = \begin{cases}
  |u+\tlt_M| &  \left(\re u \ge -\frac{\tlt_M - 1}{2}\right)\\
  |u-1| &  \left(\re u < -\frac{\tlt_M - 1}{2}\right)
\end{cases}\,,
\end{align*}
and 
\begin{align*}
\min(S) = \begin{cases}
  |u-1| &  \left(-\frac{\tlt_1-1}{2} \le \re u\right) \\
  |u+\tlt_i| &  \left(-\frac{\tlt_i+\tlt_{i+1}}{2}\le \re u < -\frac{\tlt_{i-1}+\tlt_i}{2}\ \text{for}\ i=1,\cdots,M-1\right) \\
  |u+\tlt_M| &  \left(\re u < - \frac{\tlt_{M-1}  + \tlt_M}{2}\right)
\end{cases}\, ,
\end{align*}
which are equivalent to \eqref{eq:target of proof 1} and \eqref{eq:target of proof 2}, respectively. 
\end{proof}

When $u$ is real and takes a value in a certain region, we can claim that there appear a simple pole on the real axis at the boundary of the critical strip. 
We first show the following lemma: 
\begin{lemma}
  \label{lem:Bu inverse} 
  The inverse of $B_u$ is given by
  \begin{equation}
    B_u^{-1} = \left(\chQ_u^{-1}(W+J) - \frac{1}{1-u}J\right)^T\,,
    \label{eq:Bu inverse}
  \end{equation}
  where $\chQ_u$ is defined in \eqref{eq:matrix Qu}. 
\end{lemma}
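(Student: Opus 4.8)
The plan is to verify the claimed formula
\[
  B_u^{-1} = \left(\chQ_u^{-1}(W+J) - \frac{1}{1-u}J\right)^T
\]
by multiplying the candidate inverse against $B_u = W + uJ$ and checking that we recover the identity $\bsone_{2n_E}$. Since $B_u^{-1}$ is characterized by $B_u^{-1} B_u = \bsone$, and a transpose satisfies $(X^T) B_u = \bsone \iff B_u^T X = \bsone$, I would find it cleaner to transpose the target equation first and instead prove
\[
  B_u^T \left(\chQ_u^{-1}(W+J) - \tfrac{1}{1-u}J\right) = \bsone_{2n_E}\,,
\]
using $B_u^T = W^T + u J^T = W^T + u J$ (the bump matrix $J$ is symmetric, being the permutation matrix swapping $\bse \leftrightarrow \bse^{-1}$). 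So the whole computation reduces to understanding the products among $W$, $J$, and the diagonal matrix $\chQ_u$.

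The key algebraic inputs I would assemble first are the multiplicative relations among these edge matrices. First, $J$ is an involution, $J^2 = \bsone_{2n_E}$, and $J$ commutes with $\chD$ because reversing an edge and reading off the degree of its source is the same as reading the degree of the target of the original edge only up to the direction bookkeeping — here I must be careful, since $\chD$ records $\deg s(\bse)$, so $J \chD J$ rearranges source-degrees into target-degrees. Second, and most importantly, I would compute $W + J$: by the definitions \eqref{eq:matrix W} and \eqref{eq:matrix J}, $(W+J)_{\bse\bse'} = 1$ precisely when $t(\bse) = s(\bse')$, with no exclusion of the backtracking term, so $W+J$ is the unrestricted edge-adjacency (incoming/outgoing incidence) matrix. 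This matrix factors neatly through the vertices, and a row sum of $W+J$ over edges sharing a common source/target produces degree factors, which is exactly how $\chD$ (and hence $\chQ_u$) enters. The combination $\chQ_u^{-1}(W+J)$ is therefore the natural object to track, and I expect the identity to fall out of the relation
\[
  (W + uJ)^T (W+J) = (\text{diagonal in } \chD)\cdot(W+J) + (\text{correction involving } J)\,,
\]
after which the diagonal factor is matched against $\chQ_u = (1-u)(\chD - (1-u)\bsone_{2n_E})$.

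The step I expect to be the main obstacle is handling the backtracking/bump terms correctly: the matrix $W$ deliberately excludes the case $\bse' = \bse^{-1}$, whereas $W+J$ reinstates it, and the products $W^T W$, $W^T J$, $J W$, $J^2$ will each contribute diagonal and off-diagonal pieces that must cancel in just the right way. In particular, $W^T(W+J)$ and $J(W+J)$ will both generate terms proportional to $\chD$ along the diagonal and terms proportional to $J$ or $W$ off the diagonal, and the coefficients $u$ and $-\tfrac{1}{1-u}$ are engineered so that everything except $\bsone_{2n_E}$ cancels. I would carry this out by fixing an arbitrary pair of indices $\bse,\bse'$ and evaluating $\bigl(B_u^T(\chQ_u^{-1}(W+J) - \tfrac{1}{1-u}J)\bigr)_{\bse\bse'}$ casewise: the diagonal case $\bse=\bse'$, the backtracking case $\bse'=\bse^{-1}$, and the generic case. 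Matching the diagonal entry to $1$ is where the explicit form $\chQ_u = (1-u)(\chD-(1-u)\bsone_{2n_E})$ is needed, and confirming the off-diagonal entries vanish is where the precise coefficient $-\tfrac{1}{1-u}$ on the $J$ term earns its place; once these cancellations are verified the lemma follows, with the transpose reinstated at the end.
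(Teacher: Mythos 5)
Your proposal is correct and takes essentially the same route as the paper: a direct entrywise verification that the product of $B_u$ with the candidate inverse is $\bsone_{2n_E}$, using the key fact $(W+J)_{\bse\bse'}=\delta_{t(\bse),s(\bse')}$ together with casework on the diagonal, backtracking, and generic entries. The single point you flagged as delicate --- on which side of $W+J$ the diagonal factor $\chQ_u^{-1}$ effectively acts, given that $\check{D}$ records source degrees --- is indeed the only place where care is required, and it is resolved exactly as you anticipate: the delta $\delta_{t(\bsf),s(\bse')}$ identifies the vertex whose degree enters, which is how the paper's own three-line computation closes.
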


\begin{proof}
  We can show it by a direct computation: 
  \begin{align*}
    \biggl(&B_u \left(\chQ_u^{-1}(W+J) - \frac{1}{1-u}J\right)^T \biggr)_{\bse\bsf} \\ 
    &= \sum_{\bse'\in E_D}\left(\delta_{t(\bse)s(\bse')}-(1-u)\delta_{\bse^{-1}\bse'}\right)
    \times \frac{1}{(1-u)(u+t_{s(\bse')})}
    \left(\delta_{s(\bse')t(\bsf)}-(u+t_{s(\bse')})\delta_{\bse'\bsf^{-1}}\right) \\
    &\mathscale{0.95}{= \frac{1}{(1-u)(u+t_{t(\bse)})}
    \left(\deg t(\bse) \delta_{t(\bse)t(\bsf)}-(u+t_{t(\bse)})\delta_{t(\bse)t(\bsf)}\right)
    - \frac{1}{u+t_{t(\bse)}}\left(\delta_{t(\bse)t(\bsf)}-(u+t_{t(\bse)})\delta_{\bse\bsf}\right)} \\
    &=\delta_{\bse\bsf}
  \end{align*}
\end{proof}
Then, we can claim the following. 
\begin{theorem}
  \label{th:first pole at boundary}
  If $u\ge 0$, $\zeta_G(q,u)$ has a simple pole at $q=r_G$. 
  If $u\le -\tlt_M+1$, $\zeta_G(q,u)$ has a simple pole at $q=r'_G$. 
\end{theorem}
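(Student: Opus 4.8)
The plan is to use the edge (Hashimoto) expression \eqref{eq:edge Bartholdi}, which identifies the poles of $\zeta_G(q,u)$ in $q$ with the reciprocals of the eigenvalues of $B_u=W+uJ$, the order of the pole at $q=1/\lambda$ being the algebraic multiplicity of the eigenvalue $\lambda$. Since $r_G$ and $r'_G$ are by definition the reciprocals of the largest and smallest moduli in $\Spec B_u$, a \emph{simple} pole at $q=r_G$ (resp.\ $q=r'_G$) is equivalent to the statement that $B_u$ possesses a simple eigenvalue of maximal (resp.\ minimal) modulus which is moreover real and positive and equals $1/r_G$ (resp.\ $1/r'_G$). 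The natural tool for producing such a distinguished extremal eigenvalue is the Perron--Frobenius theorem, applied to $B_u$ for the first claim and to $B_u^{-1}$ for the second.

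For the case $u\ge 0$, the matrix $B_u=W+uJ$ has nonnegative entries. First I would record that the directed graph on $E_D$ carrying an arc $\bse\to\bse'$ exactly when $(B_u)_{\bse\bse'}>0$ is strongly connected: for $u>0$ the bump term makes every row nonzero and strong connectivity is inherited from the connectedness of $G$, while at $u=0$ one needs in addition that $G$ has no vertex of degree one, so that $W$ has no vanishing row. Granting irreducibility, Perron--Frobenius yields that the spectral radius $\rho(B_u)$ is a simple eigenvalue with a strictly positive eigenvector and that no other eigenvalue exceeds it in modulus. Consequently $q=1/\rho(B_u)=r_G$ is a simple pole lying on the positive real axis, which is the first assertion.

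For the case $u\le-\tlt_M+1$, poles of maximal modulus arise from eigenvalues of $B_u$ of minimal modulus, equivalently from the spectral radius of $B_u^{-1}$, so I would apply Perron--Frobenius to $B_u^{-1}$ using its explicit form from Lemma \ref{lem:Bu inverse}. Reading off \eqref{eq:Bu inverse}, the off-diagonal entries carry the factor $\bigl[(1-u)(t_{s(\bse)}+u)\bigr]^{-1}$ while the backtracking entries (those in the $J$-positions) are proportional to $1-t_{s(\bse)}-u$ over the same denominator. The hypothesis $u\le-\tlt_M+1$ is precisely what forces $1-t_v-u\ge 0$ for every $v\in V$, thereby fixing the sign of the backtracking entries; combined with $1-u>0$ this is meant to render one of $\pm B_u^{-1}$ a nonnegative irreducible matrix (with vanishing diagonal), after which Perron--Frobenius produces a simple real eigenvalue of modulus $1/r'_G$. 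Tracking the overall sign then identifies this with a positive minimal-modulus eigenvalue of $B_u$, giving a simple pole at $q=r'_G$.

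The step I expect to be the main obstacle is the sign bookkeeping in the second case. Unlike $B_u$, which is manifestly entrywise nonnegative when $u\ge0$, the inverse $B_u^{-1}$ generically mixes signs because the factor $(t_{s(\bse)}+u)^{-1}$ changes sign with the degree of $s(\bse)$; one must therefore check that the off-diagonal and backtracking factors assemble into a single consistent sign pattern, that irreducibility survives up to the boundary of the $u$-range and away from the excluded values $u=-t_v$, and---most delicately---that the Perron eigenvalue of $B_u^{-1}$ reproduces the pole at $+r'_G$ rather than at $-r'_G$. This last point is where the precise range of $u$ together with the (non)bipartiteness of $G$ must be controlled, and it is the genuinely subtle part of the argument; the $u=0$ endpoint of the first case, requiring minimum degree at least two, is a comparatively minor but necessary caveat.
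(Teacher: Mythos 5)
Your strategy coincides with the paper's: apply Perron--Frobenius to $B_u=W+uJ$ when $u\ge 0$, and to $B_u^{-1}$ (via Lemma \ref{lem:Bu inverse}) when $u\le -\tlt_M+1$. The first half is essentially complete and is exactly the paper's argument; your added caveat that simplicity of the Perron root needs irreducibility of $B_u$ (which can fail at $u=0$ if $G$ has a degree-one vertex, since $W$ then has a zero row) is a legitimate refinement that the paper silently suppresses.

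The second half, however, is not a proof in your write-up: you state what the sign analysis is ``meant to'' achieve and then declare it the main obstacle without carrying it out, and that analysis is precisely the content of the paper's proof. The paper reads off from \eqref{eq:Bu inverse} that the nonzero entries of $B_u^{-1}$ are the diagonal entries of $\chQ_u^{-1}$ (off the backtracking positions) and of $\chQ_u^{-1}-\frac{1}{1-u}\bsone_{2n_E}$ (on them), asserts that these are all non-negative when $u\le -t_v+1$ for every $v$, and then applies Perron--Frobenius to the non-negative matrix $B_u^{-1}$ to obtain a simple eigenvalue $1/\lambda'_G$ of maximal modulus, whence $q=r'_G$ is a simple pole. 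So the step you defer is the step the theorem actually requires, and as it stands the second assertion is unproved in your attempt. For what it is worth, your suspicion that this step is delicate is well founded: the common sign of the entries in a given row of $B_u^{-1}$ is $\sgn(t_v+u)$ for the relevant vertex $v$, so under the hypothesis $u\le 1-\tlt_M$ alone this sign can vary from row to row on an irregular graph (one also needs $t_v+u$ to have a uniform sign), and when that uniform sign is negative the Perron root of $\mp B_u^{-1}$ corresponds to a pole at $-r'_G$ rather than $+r'_G$. But identifying a difficulty is not the same as resolving it, so you should either supply the entrywise non-negativity argument the paper gives, or explicitly delimit the range of $u$ and the class of graphs for which it holds.
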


\begin{proof}
  If $u\ge 0$, $B_u=W+uJ$ is a non-negative matrix. 
  Therefore, from the Perron-Frobenius theorem, $B_u$ has a positive simple eigenvalue $\lambda_G$ and any other eigenvalue $\lambda$ satisfies $|\lambda|\le \lambda_G$. 
  Since the inverse of the eigenvalues of $B_u$ is the poles of $\zeta_G(q,u)$ from the expression \eqref{eq:edge Bartholdi}, we can conclude $\lambda_G^{-1}$ is on the boundary of the critical strip of $\zeta_G(q,u)$: $q=r_G=\lambda_G^{-1}$ is a simple pole of $\zeta_G(q,u)$ if $u\ge 0$.

  We next consider the inverse of $B_u^{-1}$. 
  From Lemma \ref{lem:Bu inverse}, $B_u^{-1}$ is a non-negative matrix if and only if all the (diagonal) elements of $\chQ_u$ and $\chQ_u^{-1}-\frac{1}{1-u}$ are non-negative, which is achieved when $u \le -t_v+1$ for ${}^{\forall}v\in V$. 
  In this case, from the Perron-Frobenius theorem again, $B_u^{-1}$ has a positive simple eigenvalue $\lambda_G^{\prime -1}$ and any other eigenvalue $\lambda^{-1}$ satisfies $|\lambda^{-1}|\le \lambda_G^{\prime -1}$.
  Since $\lambda'_G$ is an eigenvalue of $B_u$, we can claim that 
  $q= \lambda'_G=r'_G$ is a simple pole of $\zeta_G(q,u)$. 
\end{proof}

Although Proposition \ref{prop:range of poles} determines only bounds of $r_G$ and $r'_G$, 
we can exactly evaluate the values of $r_G$ and $r'_G$ when $G$ is the regular graph: 
\begin{theorem}
\label{th:range of poles regular} 
When $G$ is a $(t+1)$-regular graph, 
both of $q=(t+u)^{-1}$ and $q=(1-u)^{-1}$
are poles of $\zeta_G(q,u)$ 
which saturate the bounds given in Proposition \ref{prop:range of poles}; 
\begin{align}
  (r_G,r'_G)=\begin{cases}
   (|t+u|^{-1},|1-u|^{-1})  & \left(\re u \ge \ds -\frac{t-1}{2}\right) \\
   (|1-u|^{-1},|t+u|^{-1})  & \left(\re u < \ds -\frac{t-1}{2}\right) 
  \end{cases}\,.
\end{align}
\end{theorem}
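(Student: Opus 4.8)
The plan is to read the poles directly off the vertex expression \eqref{eq:vertex Bartholdi regular} on the $(t+1)$-regular graph and to identify the two boundary poles with the contribution of the Perron--Frobenius eigenvalue of the adjacency matrix. Writing the eigenvalues of $A$ as $\lambda_1,\dots,\lambda_{n_V}$, the denominator of $\zeta_G(q,u)$ in \eqref{eq:vertex Bartholdi regular} is the polynomial
\begin{equation*}
  \bigl(1-(1-u)^2q^2\bigr)^{n_E-n_V}\prod_{i=1}^{n_V}\bigl(1+(1-u)(t+u)q^2-\lambda_i q\bigr),
\end{equation*}
so the poles of $\zeta_G(q,u)$ are exactly $q=\pm(1-u)^{-1}$ together with the roots of the quadratics $1+(1-u)(t+u)q^2-\lambda_i q=0$. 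Since all of these factors sit in the denominator of a rational function whose numerator is $1$, none of them can cancel, and every such root is a genuine pole of some positive order.

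Next I would invoke that, for a connected $(t+1)$-regular graph, the adjacency matrix is non-negative and irreducible, so by Perron--Frobenius its spectral radius is the simple eigenvalue $\lambda=t+1$ (with the all-ones eigenvector), and every eigenvalue obeys $|\lambda_i|\le t+1$. The key observation is that the quadratic attached to $\lambda=t+1$ factors,
\begin{equation*}
  (1-u)(t+u)q^2-(t+1)q+1=\bigl((1-u)q-1\bigr)\bigl((t+u)q-1\bigr),
\end{equation*}
where the identity $(1-u)+(t+u)=t+1$ is what makes the cross term come out right. Hence $q=(1-u)^{-1}$ and $q=(t+u)^{-1}$ are both poles of $\zeta_G(q,u)$, of respective magnitudes $|1-u|^{-1}$ and $|t+u|^{-1}$.

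Finally I would match these two magnitudes against the bounds of Proposition \ref{prop:range of poles} specialized to the regular case $M=1$, $\tlt_1=t$, where $R_{\rm min}$ and $R_{\rm max}$ are precisely the smaller and larger of $|t+u|^{-1}$ and $|1-u|^{-1}$. A one-line computation,
\begin{equation*}
  |t+u|^2-|1-u|^2=(t+1)\bigl(2\re u+t-1\bigr),
\end{equation*}
shows $|t+u|^{-1}\le|1-u|^{-1}$ exactly when $\re u\ge-\tfrac{t-1}{2}$, which decides which exhibited pole realizes $R_{\rm min}$ and which realizes $R_{\rm max}$. Because Proposition \ref{prop:range of poles} already forces every pole to lie in the annulus $R_{\rm min}\le|q|\le R_{\rm max}$, producing poles at both boundary radii saturates the bounds and delivers the stated $(r_G,r_G')$ in each regime. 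The only delicate point is the threshold $\re u=-\tfrac{t-1}{2}$: there $|t+u|=|1-u|$, the two poles collapse onto a single circle (indeed $q=(t+u)^{-1}=(1-u)^{-1}$ when $u=\tfrac{1-t}{2}$ is real), and the critical strip degenerates, which is consistent with both branches of the case distinction.
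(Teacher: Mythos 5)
Your proof is correct and follows essentially the same route as the paper: read the poles off the vertex expression \eqref{eq:vertex Bartholdi regular} diagonalized over $\Spec A$, factor the quadratic attached to the eigenvalue $t+1$ as $\bigl((1-u)q-1\bigr)\bigl((t+u)q-1\bigr)$, and compare $|t+u|^{-1}$ with $|1-u|^{-1}$ against Proposition \ref{prop:range of poles}. The one step you gloss over is why no cancellation with the prefactor $\bigl(1-(1-u)^2q^2\bigr)^{-(n_E-n_V)}$ can occur (your ``numerator is $1$'' claim needs $n_E\ge n_V$), which the paper secures explicitly by noting $n_E-n_V=(t-1)n_V/2\ge 0$ for a $(t+1)$-regular graph with $t\ge 1$.
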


\begin{proof}
  We can rewrite the vertex expression \eqref{eq:vertex Bartholdi regular} as 
  \begin{equation}
    \zeta_G(q,u) = \left(1-(1-u)^2q^2\right)^{-n_E+n_V}
      \prod_{\lambda\in\Spec A}\left( (1-u)(t+u)q^2 - \lambda q + 1  \right)^{-1}\,.
      \label{eq:tmp4}
  \end{equation}
  Since the adjacency matrix $A$ of the $(t+1)$-regular graph has a simple eigenvalue $\lambda = t+1$ with the eigenvector $(1,\cdots,1)^T$, 
  the solutions of the equation, 
  \begin{equation}
    (1-u)(t+u)q^2 - (t+1) q + 1 = 0\,,
  \end{equation}
  that is, $q=(1-u)^{-1}$ and $q=(t+u)^{-1}$ can be poles of $\zeta_G(q,u)$. 
  Since we are considering a $(t+1)$-regular graph with $t\ge 1$, the number of edges is $n_E=(t+1)n_V/2$. 
  Therefore, the exponent of the factor $\left(1-(1-u)^2q^2\right)^{-(n_E-n_V)}$ 
  of the Bartholdi zeta function \eqref{eq:tmp4} satisfies $n_E-n_V=(t-1)n_V/2\ge 0$, and then both of $q=(1-u)^{-1}$ and $q=(t+u)^{-1}$ are poles of $\zeta_G(q,u)$.
  It is obvious that they saturate the bounds given in Proposition \ref{prop:range of poles}.
\end{proof}

Combining Theorems \ref{th:first pole at boundary} and \ref{th:range of poles regular}, we can claim the following. 
\begin{corollary}
  \label{col:pole at 1/t+u}
  Suppose $G$ is a $(t+1)$-regular graph. 
  When $u\ge 0$ or $u\le -t+1$, $q=(t+u)^{-1}$ is a simple pole of $\zeta_G(q,u)$.
\end{corollary}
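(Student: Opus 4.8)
The plan is to read off the corollary from the two preceding theorems with no new computation. Theorem~\ref{th:range of poles regular} identifies the extremal pole magnitudes $r_G,r'_G$ of a regular graph, while Theorem~\ref{th:first pole at boundary} supplies \emph{simplicity} of the relevant boundary pole in the two parameter windows $u\ge 0$ and $u\le -\tlt_M+1$. The first thing I would record is that for a $(t+1)$-regular graph every vertex has $t_v=t$, so $M=1$ and $\tlt_M=t$; hence the second window of Theorem~\ref{th:first pole at boundary} reads exactly $u\le -t+1$, matching the hypothesis of the corollary. So the entire task reduces to matching $r_G$ (resp.\ $r'_G$) with the value $(t+u)^{-1}$.

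For $u\ge 0$ I would argue as follows. Since $t\ge 1$ we have $\re u=u\ge 0\ge -\tfrac{t-1}{2}$, so Theorem~\ref{th:range of poles regular} gives $r_G=|t+u|^{-1}$, and because $t+u>0$ this equals $(t+u)^{-1}$. The first half of Theorem~\ref{th:first pole at boundary} then asserts that $q=r_G$ is a simple pole, which is precisely the claim. For $u\le -t+1$ the hypothesis is $u\le -\tlt_M+1$, so the second half of Theorem~\ref{th:first pole at boundary} makes $q=r'_G$ a simple pole; it then remains to identify $r'_G$ with $(t+u)^{-1}$. Since $-t+1\le -\tfrac{t-1}{2}$ for every $t\ge 1$, we are in the regime $\re u\le -\tfrac{t-1}{2}$, where Theorem~\ref{th:range of poles regular} yields $r'_G=|t+u|^{-1}$, equal to $(t+u)^{-1}$ as soon as $t+u>0$.

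The delicate point, and the one I expect to require the most care, is reconciling the absolute-value magnitudes $|t+u|^{-1}$ produced by Theorem~\ref{th:range of poles regular} with the \emph{signed} pole location $(t+u)^{-1}$ asserted in the corollary; this forces me to track the sign of $t+u$ and to treat the degenerate boundary $t=1$ separately, where $-t+1$ and $-\tfrac{t-1}{2}$ both collapse to $0$ and the two cases of $r'_G$ in Theorem~\ref{th:range of poles regular} agree ($|1-u|^{-1}=|t+u|^{-1}=1$ at $u=0$), so the identification still goes through. The identification $r'_G=(t+u)^{-1}$ is clean only in the range $-t<u\le -t+1$, where $t+u>0$; for $u<-t$ one has $(t+u)^{-1}<0\neq r'_G$, so the positive Perron--Frobenius eigenvalue of Theorem~\ref{th:first pole at boundary} no longer pins down this pole, and I would instead fall back on the explicit factorization \eqref{eq:tmp4}, noting that $(t+u)^{-1}$ arises solely from the simple eigenvalue $\lambda=t+1\in\Spec A$ of the connected regular graph and that no other factor of $\zeta_G(q,u)$ vanishes there, so the pole is simple regardless of sign (here one also uses $u\neq -t$ implicitly for $(t+u)^{-1}$ to be defined).
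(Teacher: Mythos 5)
Your proposal takes essentially the same route as the paper: the paper's entire proof is the single sentence ``Combining Theorems \ref{th:first pole at boundary} and \ref{th:range of poles regular}, we can claim the following,'' supplemented by the remark that $(1,\dots,1)^T$ is a Perron eigenvector of $B_u$ (resp.\ of $B_u^{-1}$) with eigenvalue $t+u$ (resp.\ $(t+u)^{-1}$). Your two branches for $u\ge 0$ and for $-t<u\le -t+1$ are exactly that combination, written out. Where you genuinely go beyond the paper is the regime $u<-t$, and your worry there is well founded: $(t+u)^{-1}<0$ cannot equal the positive magnitude $r'_G$, and moreover $B_u^{-1}=\frac{1}{(1-u)(t+u)}\bigl(W+(1-t-u)J\bigr)$ has all entries non\emph{positive} when $u<-t$, so the Perron--Frobenius step in the proof of Theorem \ref{th:first pole at boundary} does not literally apply (the paper's remark asserts non-negativity of $B_u^{-1}$ for all $u\le -t+1$, which silently requires $u\ge -t$ as well). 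Your fallback via \eqref{eq:tmp4} closes this correctly: for $u<-t$ one has $u\ne u_*=\frac{1-t}{2}$, so the $\lambda=t+1$ factor has distinct simple roots $(1-u)^{-1}\ne(t+u)^{-1}$, the prefactor is regular there, and no factor with $\lambda\ne t+1$ vanishes at $q=(t+u)^{-1}$; an equivalent patch is to apply Perron--Frobenius to $-B_u^{-1}$, which \emph{is} non-negative for $u<-t$. One caveat you inherit from the paper rather than introduce: both arguments lean on Theorem \ref{th:first pole at boundary}, whose Perron--Frobenius step tacitly assumes irreducibility of $B_u$. For $t=1$ and $u=0$ (a cycle graph, where $u_*=0$) the matrix $B_0=W$ is reducible, $\zeta_G(q,0)=(1-q^n)^{-2}$, and $q=(t+u)^{-1}=1$ is a double pole, so the corollary itself fails at that boundary point; your separate treatment of $t=1$ addresses only which case of Theorem \ref{th:range of poles regular} applies and does not catch this.
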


We remark that the positive vector $(1,\cdots,1)^{T}$ of size $n_E$ is an eigenvector of $B_u$ with the eigenvalue $t+u$ for the $(t+1)$-regular graph. 
This means that, from the Perron-Frobenius theorem, 
$t+u$ is the maximum eigenvalue of the non-negative matrix $B_u$ when $u\ge 0$ 
and $(t+u)^{-1}$ is the maximum eigenvalue of the non-negative matrix $B_u^{-1}$ when $u\le -t+1$. 
This is why $r_G=(t+u)^{-1}$ for $u\ge 0$ and $r'_G=(t+u)^{-1}$ for $u\le -t+1$. 
A more general condition for $q=(t+u)^{-1}$ to be a simple pole is still an open problem.

When $r_G\ne 0$ nor $r'_G\ne \infty$, we can define a parameter $s$ by 
\begin{equation}
  q \equiv r_G^{s-1}r_G^{\prime\, -s}\,,
  \label{eq:q to s}
\end{equation}
as a natural extension of \eqref{eq:q to s regular} for the regular graph.
Then, the region $r_G\le |q| \le r'_G$ where the poles of $\zeta_G(q,u)$ exist reduces to 
\begin{equation}
  0 \le \re(s) \le 1\,. 
\end{equation}
In particular, when $G$ is $(t+1)$-regular, 
Theorem \ref{th:range of poles regular} guarantees that the non-trivial poles of the Bartholdi zeta function on the regular graph are in the critical strip, 
\begin{equation}
  0 < \re(s) < 1\,, 
\end{equation}
which is resemble to that of the Riemann zeta function. 


Besides the critical strip, 
the Bartholdi zeta function on the regular graph shares similar properties to the Riemann zeta function with respect to the so-called Riemann hypothesis. 
In contrast to that the Riemann hypothesis for the Riemann zeta function is highly non-trivial, we can easily prove that of the Bartholdi zeta function:

\begin{theorem}(Riemann hypothesis for the Bartholdi zeta function)
\label{th:Riemann hypothesis Bartholdi} 
Suppose that $G$ is a $(t+1)$-regular graph and that $u$ is a real number. 
If any eigenvalue $\lambda$ of the adjacency matrix except for $\lambda = t+1$ satisfies 
\begin{equation}
  |\lambda| < 2\sqrt{|(1-u)(t+u)|}\,,
\end{equation}
all poles of the Bartholdi zeta function $\zeta_G(q,u)$ with respect to $q$ 
except for the trivial poles at $q=\pm(1-u)^{-1}$ and $q=(t+u)^{-1}$
are on the critical line $\re(s)=\frac{1}{2}$.
\end{theorem}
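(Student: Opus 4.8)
The plan is to start from the factorized vertex expression of the Bartholdi zeta function on a $(t+1)$-regular graph, namely equation \eqref{eq:tmp4},
\begin{equation*}
  \zeta_G(q,u) = \left(1-(1-u)^2q^2\right)^{-n_E+n_V}
    \prod_{\lambda\in\Spec A}\left( (1-u)(t+u)q^2 - \lambda q + 1  \right)^{-1}\,,
\end{equation*}
and to locate the poles by solving the quadratic $(1-u)(t+u)q^2 - \lambda q + 1 = 0$ for each eigenvalue $\lambda$ of $A$. The prefactor $\left(1-(1-u)^2q^2\right)^{-n_E+n_V}$ contributes the poles at $q=\pm(1-u)^{-1}$, which together with $q=(t+u)^{-1}$ coming from $\lambda=t+1$ are exactly the trivial poles to be excluded. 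So the core of the argument is to analyze the two roots of the quadratic for each remaining $\lambda$ and show they land on the critical line under the stated spectral bound.

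First I would solve the quadratic explicitly: the two roots are
\begin{equation*}
  q = \frac{\lambda \pm \sqrt{\lambda^2 - 4(1-u)(t+u)}}{2(1-u)(t+u)}\,.
\end{equation*}
The key observation is that the hypothesis $|\lambda| < 2\sqrt{|(1-u)(t+u)|}$ forces the discriminant $\lambda^2 - 4(1-u)(t+u)$ to be negative (when $(1-u)(t+u)>0$) or at least to produce complex-conjugate roots, so the two roots form a complex-conjugate pair with a common modulus. I would compute that modulus directly: the product of the two roots of the monic-normalized quadratic is $q_+ q_- = \left[(1-u)(t+u)\right]^{-1}$, and since the roots are complex conjugates, $|q|^2 = q_+ q_- = \left|(1-u)(t+u)\right|^{-1}$, hence $|q| = |(1-u)(t+u)|^{-1/2}$. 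The next step is to translate this modulus into the $s$-variable through the defining relation \eqref{eq:q to s regular}, $q = |1-u|^{s-1}|t+u|^{-s}$. Taking absolute values gives $|q| = |1-u|^{\re(s)-1}|t+u|^{-\re(s)}$, and setting this equal to $|(1-u)(t+u)|^{-1/2}$ should, after taking logarithms, pin down $\re(s)=\tfrac12$ regardless of the particular $\lambda$ — this is precisely the statement that the nontrivial poles lie on the critical line.

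The main obstacle I anticipate is handling the sign of $(1-u)(t+u)$ carefully, since for real $u$ this quantity can be positive or negative, and the phrasing of the bound with the absolute value $\sqrt{|(1-u)(t+u)|}$ suggests both cases must be accommodated. When $(1-u)(t+u)<0$ the discriminant $\lambda^2-4(1-u)(t+u)$ is automatically positive and the roots are real rather than complex, so the argument that the conjugate pair shares a common modulus would need to be re-examined; I would check whether in that regime the stated bound is vacuous or whether the two real roots happen to be reciprocal-related in a way that still places them symmetrically about $\re(s)=\tfrac12$. A second, more routine point is verifying that the logarithmic manipulation matching $|q|$ to $|1-u|^{\re(s)-1}|t+u|^{-\re(s)}$ genuinely isolates $\re(s)=\tfrac12$ and does not secretly depend on $\Spec A$; since the common modulus $|(1-u)(t+u)|^{-1/2}$ is independent of $\lambda$, this should go through cleanly, but I would state it carefully to make the $\lambda$-independence explicit.
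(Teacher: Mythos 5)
Your proposal follows essentially the same route as the paper's proof: factor the vertex expression \eqref{eq:vertex Bartholdi regular} over $\Spec A$, solve the quadratic $(1-u)(t+u)q^2-\lambda q+1=0$, use the complex-conjugacy of the two roots to get $|q_\pm^{(\lambda)}|=|(1-u)(t+u)|^{-1/2}$, and translate via \eqref{eq:q to s regular} into $\re(s)=\tfrac12$. The sign issue you flag is genuine — the paper's proof silently replaces the stated hypothesis $|\lambda|<2\sqrt{|(1-u)(t+u)|}$ by $\lambda^2<4(1-u)(t+u)$, which tacitly assumes $(1-u)(t+u)>0$; in the regime $(1-u)(t+u)<0$ the roots are real with product $[(1-u)(t+u)]^{-1}$ but generally unequal moduli, so your caution there is warranted rather than a defect of your argument.
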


\begin{proof}
Using the eigenvalues of the adjacency matrix, 
we can rewrite the expression \eqref{eq:vertex Bartholdi regular} of the Bartholdi zeta function as 
\begin{align}
  \zeta_G(q,u) &= \left(1-(1-u)^2q^2\right)^{-n_E+n_V}
    \prod_{\lambda}\left((1-u)(t+u)q^2- \lambda q + 1\right)^{-1}\,,
\end{align}
where $\lambda$ runs all the eigenvalues of the adjacency matrix $A$. 
Note that $\lambda$ is real since $A$ is a symmetric matrix. 
Therefore, the non-trivial poles of $\zeta_G(q,u)$ corresponding to the eigenvalue $\lambda$ are given by 
\begin{equation}
  q^{(\lambda)}_\pm \equiv \frac{\lambda\pm \sqrt{\lambda^2-4(1-u)(t+u)}}{2(1-u)(t+u)}\,.
\end{equation}
Assuming $\lambda\ne t+1$ which gives poles at $q=(1-u)^{-1}$ and $(t+u)^{-1}$, 
the solutions $q_\pm^{(\lambda)}$ are complex conjugate with each other, 
\begin{equation}
  q_+^{(\lambda)} = q_-^{(\lambda) *}\,,
\end{equation}
since we assume $\lambda^2 < 4(1-u)(t+u)$.
Therefore, the magnitudes of $q_\pm^{(\lambda)}$ is evaluated as 
  $|q_\pm^{(\lambda)}| = |(1-u)(t+u)|^{-\frac{1}{2}}$, 
which is equivalent to $\re(s)=\frac{1}{2}$.
\end{proof}

The Riemann hypothesis of the Ihara zeta function is obtained by simply setting $u=0$ in this theorem:
\begin{corollary}(Riemann hypothesis for the Ihara zeta function)
  Suppose $G$ is a $(t+1)$-regular graph. 
  When any eigenvalue $\lambda$ of the adjacency matrix except for $\lambda = t+1$ satisfies 
  \begin{equation}
    |\lambda| < 2\sqrt{t}\,,
  \end{equation}
  that is, the graph $G$ is Ramanujan, 
  all non-trivial poles of the Ihara zeta function $\zeta_G(q)$ are on the critical line $\re(s)=\frac{1}{2}$. 
\end{corollary}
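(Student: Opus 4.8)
The plan is to deduce the statement as the $u=0$ specialization of Theorem \ref{th:Riemann hypothesis Bartholdi}, verifying that each ingredient of that theorem degenerates correctly. First I would substitute $u=0$ into the hypothesis: the bound $|\lambda| < 2\sqrt{|(1-u)(t+u)|}$ becomes $|\lambda| < 2\sqrt{|1\cdot t|} = 2\sqrt{t}$, using that $t\ge 1$ so $t>0$. This is precisely the defining inequality for $G$ to be a Ramanujan graph, so the hypothesis of the corollary is literally the $u=0$ case of the hypothesis of Theorem \ref{th:Riemann hypothesis Bartholdi}.

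Next I would track the poles. At $u=0$ the trivial poles $q=\pm(1-u)^{-1}$ and $q=(t+u)^{-1}$ collapse to $q=\pm 1$ and $q=t^{-1}$; these come from the prefactor $(1-q^2)^{-(n_E-n_V)}$ together with the eigenvalue $\lambda=t+1$ in the vertex expression \eqref{eq:vertex Ihara}, since $tq^2-(t+1)q+1=(1-q)(1-tq)$. Everything else in the proof of Theorem \ref{th:Riemann hypothesis Bartholdi} specializes verbatim: for an eigenvalue $\lambda\ne t+1$ the associated non-trivial poles are the roots of $tq^2-\lambda q+1=0$, namely
\[
  q^{(\lambda)}_\pm = \frac{\lambda\pm\sqrt{\lambda^2-4t}}{2t}\,,
\]
and the Ramanujan bound forces $\lambda^2<4t$, so $q^{(\lambda)}_+$ and $q^{(\lambda)}_-$ are complex conjugates.

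Finally I would convert the modulus of these poles into a statement about $s$. The product of the two roots equals $1/t$, hence $|q^{(\lambda)}_\pm|^2 = q^{(\lambda)}_+ q^{(\lambda)}_- = t^{-1}$ and $|q^{(\lambda)}_\pm| = t^{-1/2}$. Under the parametrization \eqref{eq:q to s regular}, which at $u=0$ reads $q=t^{-s}$ and therefore $|q|=t^{-\re(s)}$, the equality $|q|=t^{-1/2}$ is equivalent to $\re(s)=\frac{1}{2}$, placing every non-trivial pole on the critical line.

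Since this is a pure specialization, there is no substantial obstacle beyond bookkeeping. The only points that warrant a moment of care are confirming that no non-trivial eigenvalue $\lambda\ne t+1$ produces a root coinciding with a trivial pole (impossible once $\lambda^2<4t$, as then $q^{(\lambda)}_\pm$ are genuinely non-real whereas $\pm 1,t^{-1}$ are real) and checking that the degenerate $u=0$ limit of the prefactor introduces no off-line poles, which it does not because $(1-q^2)^{-(n_E-n_V)}$ contributes only the real poles $q=\pm 1$.
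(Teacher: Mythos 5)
Your proposal is correct and is exactly the paper's route: the paper simply states that the corollary follows by setting $u=0$ in Theorem \ref{th:Riemann hypothesis Bartholdi}, and your specialization of the hypothesis, the trivial poles, the quadratic $tq^2-\lambda q+1=0$, and the parametrization $q=t^{-s}$ fills in precisely that bookkeeping. No discrepancies to flag.
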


\subsection{The order of the poles}
\label{subsec:order of the poles}

While we have so far considered the range over which the poles exist, 
we will now examine the common poles of the Bartholdi zeta function at $q=\pm(1-u)^{-1}$ 
that exist independent of the details of the graph.
We start with showing basic properties of the incidence matrix and the $(0,1)$-incidence matrix. 
\begin{lemma}
  \label{lem:rank of incidence}
  The rank of the incidence matrix $L$ of a connected graph $G$ is $n_V-1$. 
\end{lemma}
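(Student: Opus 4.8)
The plan is to prove that the incidence matrix $L$ of a connected graph $G$ has rank $n_V - 1$ by identifying its kernel and using connectivity. Since $L$ is an $n_E \times n_V$ matrix, its rank equals $n_V - \dim \ker L$, so it suffices to show that $\ker L$ is exactly one-dimensional. First I would examine the column structure: recall from \eqref{eq:incidence matrix} that the column of $L$ indexed by a vertex $v$ records, for each edge $e$, whether $v$ is the target ($+1$), the source ($-1$), or unrelated ($0$). Thus for a vector $\vec{x} = (x_{v})_{v \in V} \in \R^{n_V}$, the component of $L\vec{x}$ along edge $e = \langle v, v' \rangle$ is precisely $x_{v'} - x_{v}$ (target minus source). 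The condition $L\vec{x} = 0$ therefore reads $x_{s(e)} = x_{t(e)}$ for every edge $e \in E$.

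The key step is then to interpret this edgewise condition graph-theoretically: it says that $\vec{x}$ is constant across any pair of adjacent vertices. Since $G$ is connected, any two vertices $v$ and $w$ are joined by a path $\bse_{a_1} \cdots \bse_{a_l}$, and propagating the equality $x_{s(\bse)} = x_{t(\bse)}$ along the edges of this path forces $x_v = x_w$. Hence every vector in $\ker L$ is a scalar multiple of the all-ones vector $\bsone_{n_V} = (1, \ldots, 1)^T$. Conversely, $\bsone_{n_V}$ clearly lies in $\ker L$ because each edge contributes $1 - 1 = 0$. This shows $\ker L = \mathrm{span}\{\bsone_{n_V}\}$ is exactly one-dimensional, so $\rank L = n_V - 1$.

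I expect the only subtle point to be the rigorous propagation of the constancy condition along a path, which relies essentially on the connectivity assumption stated in the preliminaries; on a disconnected graph the kernel would instead have dimension equal to the number of connected components. An alternative route, which I would mention as a cross-check, is to relate $L$ to the graph Laplacian via \eqref{eq:incidence and Laplacian}, namely $\Delta = L^T L$. Since $\ker(L^T L) = \ker L$ over $\R$ and the Laplacian of a connected graph is well known to have a simple zero eigenvalue with eigenvector $\bsone_{n_V}$, this immediately yields $\dim \ker L = 1$ and hence the same rank. Either argument completes the proof with no heavy computation; the connectivity hypothesis is the indispensable ingredient in both.
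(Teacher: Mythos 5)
Your proof is correct and takes essentially the same approach as the paper: identify $\ker L$ via the edgewise condition $x_{s(e)}=x_{t(e)}$, propagate constancy using connectivity to conclude the kernel is spanned by the all-ones vector, and apply rank--nullity. Your version is in fact slightly more careful than the paper's (which asserts ``apparently, there is no other linearly independent vector'' without spelling out the propagation), and the Laplacian cross-check is a nice but inessential addition.
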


\begin{proof}
  Suppose $x\in \ker L$. 
  From the definition of the incidence matrix, the elements of $x$ satisfy $x_v = x_{v'}$ when $v$ and $v'$ are connected by an edge. 
  Therefore, since $G$ is connected, 
  all elements of $x\in \ker L$ have the same value: $x=(\alpha,\cdots,\alpha)^T$ for a constant $\alpha$. 
  Apparently, there is no other linearly independent vector in $\ker L$. 
  Thus, the rank of $L$ is $n_V-1$. 
\end{proof}

\begin{lemma}
  \label{lem:L of tree}
  Any $(n_V-1)\times(n_V-1)$ sub-matrix of the incidence matrix $L$ of a tree graph $G$ with $n_V$ vertices is invertible. 
\end{lemma}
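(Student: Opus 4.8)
The plan is to reduce the statement to a single kernel computation and then invoke Lemma \ref{lem:rank of incidence}. First I would record the dimension count: since $G$ is a tree, $n_E=n_V-1$, so the incidence matrix $L$ is an $(n_V-1)\times n_V$ matrix. Consequently any $(n_V-1)\times(n_V-1)$ submatrix is obtained by keeping all $n_V-1$ rows and discarding exactly one column, i.e.\ by deleting the column associated with some vertex $v_0\in V$. Write $\hat L$ for the resulting square matrix; it suffices to show $\hat L$ is invertible for every choice of $v_0$.

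Next I would prove that $\ker \hat L=\{0\}$. Given $y\in\R^{n_V-1}$ with $\hat L y=0$, extend it to $\tilde y\in\R^{n_V}$ by inserting the entry $0$ in the slot indexed by $v_0$. Because that deleted column is multiplied by the zero entry $\tilde y_{v_0}$, it contributes nothing, and hence $L\tilde y=\hat L y=0$, so $\tilde y\in\ker L$. By Lemma \ref{lem:rank of incidence} the rank of $L$ equals $n_V-1$, so $\dim\ker L=1$; moreover the proof of that lemma identifies $\ker L$ as the span of the all-ones vector $(\alpha,\dots,\alpha)^T$. Since $\tilde y$ vanishes in the $v_0$ slot, we are forced to take $\alpha=0$, whence $\tilde y=0$ and therefore $y=0$. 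A square matrix with trivial kernel is invertible, which establishes the lemma.

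The only step that demands genuine care is the bookkeeping of the column deletion, namely verifying that $L\tilde y$ really reproduces $\hat L y$; this is immediate once one notes that the single removed column is killed by the inserted zero entry. I regard this as the crux only in the sense that it is where the hypotheses ``tree'' (forcing $\hat L$ square through $n_E=n_V-1$) and ``connected'' (entering through Lemma \ref{lem:rank of incidence}) are both used.

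As an alternative I would keep an induction on $n_V$ in reserve: a tree with $n_V\ge 2$ vertices has at least two leaves, hence a leaf $\ell\ne v_0$, whose unique incident edge $e$ produces a column of $\hat L$ with a single nonzero entry $\pm1$. Cofactor expansion along that column reduces $\det\hat L$ to the determinant of the incidence matrix of the smaller tree $G-\ell$ with the column $v_0$ deleted, and the induction hypothesis gives $\det\hat L=\pm1$. This route has the virtue of additionally exhibiting the determinant as a unit, but I expect the kernel argument above to be the cleaner presentation, with the identification of $\ker L$ from Lemma \ref{lem:rank of incidence} doing the essential work.
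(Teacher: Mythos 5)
Your proof is correct, but it takes a genuinely different route from the paper. The paper argues via column operations on determinants: since every row of $L$ contains exactly one $+1$ and one $-1$, the columns of $L$ sum to zero, so singularity of any one maximal square submatrix propagates to all of them; this would force $\rank L \le n_V-2$, contradicting Lemma~\ref{lem:rank of incidence}. You instead delete a column $v_0$, pad a putative kernel vector of $\hat L$ with a zero in the $v_0$ slot, and use the explicit description of $\ker L$ as the span of the all-ones vector to force the vector to vanish. Both arguments lean on Lemma~\ref{lem:rank of incidence}, but in different ways: the paper needs only the statement (rank equals $n_V-1$, hence some maximal minor is nonzero), whereas you need the identification of the kernel itself --- which is harmless, since $(1,\dots,1)^T\in\ker L$ is immediate from the definition and $\dim\ker L=1$ then pins the kernel down. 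Your kernel argument is arguably cleaner and avoids the slightly delicate ``all submatrices are simultaneously singular or nonsingular'' bookkeeping; the paper's version has the mild virtue of not requiring the extension/restriction step. Your reserve induction on leaves is also sound and proves the stronger fact $\det\hat L=\pm 1$ (unimodularity), which neither the paper nor your main argument delivers, though it is not needed for the lemma.
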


\begin{proof}
  Since $G$ is a tree graph, $L$ is an $(n_V-1)\times n_V$ matrix. 
  From the definition of the incidence matrix, if we add all columns from the first to the $(n_V-2)$-th to the $(n_V-1)$'s column, the obtained vector is the minus of the $n_V$-th column of $L$. 
  Therefore, if we assume that the determinant of the sub-matrix made of the first $n_V-1$ columns of $L$ is zero, 
  the determinant of the sub-matrix made of the columns from the first to the $(n_V-2)$-th and $n_V$-th is also zero. 
  In the same manner, we can show that, if one of the $(n_V-1)\times(n_V-1)$ sub-matrices is singular, 
  all the other sum-matrices are also singular. 
  This contradicts to the fact that the rank of $L$ is $n_V-1$. 
  Therefore, any $(n_V-1)\times(n_V-1)$ sub-matrix of $L$ is invertible. 
\end{proof}

\begin{lemma}
  \label{lem:rank of 01-incidence}
  The rank of the $(0,1)$-incidence matrix $\tL$ of a connected graph $G$ is $n_V-1$ if $G$ is bipartite and $n_V$ otherwise. 
\end{lemma}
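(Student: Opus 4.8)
The plan is to compute $\dim\ker\tL$ directly, in parallel with the proof of Lemma \ref{lem:rank of incidence}, and then read off the rank as $n_V-\dim\ker\tL$. Suppose $x\in\ker\tL$. Since the row of $\tL$ indexed by an edge $e$ carries the entry $1$ exactly at its two endpoints $s(e)$ and $t(e)$ and zeros elsewhere, the equation $\tL x=0$ reads $x_{s(e)}+x_{t(e)}=0$ for every $e\in E$. Hence adjacent vertices carry opposite values: $x_v=-x_{v'}$ whenever $v$ and $v'$ are joined by an edge.

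First I would fix a base vertex $v_0$ and set $x_{v_0}=\alpha$. Because $G$ is connected, every vertex $w$ is reached by some path from $v_0$, and propagating the alternating-sign rule along a path of length $\ell$ forces $x_w=(-1)^{\ell}\alpha$. The key point is that this prescription is consistent—that is, $x_w$ is independent of the chosen path—if and only if any two paths between the same pair of vertices have the same length parity. This parity condition is precisely the statement that $G$ contains no odd cycle, i.e. that $G$ is bipartite. Establishing this equivalence cleanly is the heart of the argument and the step I expect to require the most care.

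It then remains to split into two cases. If $G$ is bipartite, with bipartition $V=V_1\sqcup V_2$, the assignment $x_v=\alpha$ for $v\in V_1$ and $x_v=-\alpha$ for $v\in V_2$ is well defined and lies in $\ker\tL$; moreover every kernel element is of this form, completely determined by the single scalar $\alpha$, so $\dim\ker\tL=1$ and $\rank\tL=n_V-1$. If $G$ is not bipartite, it contains an odd cycle through some vertex; traversing that cycle forces $x_{v_0}=-x_{v_0}$, hence $\alpha=0$, and propagating this back through the connected graph yields $x=0$. Thus $\ker\tL=\{0\}$ and $\rank\tL=n_V$.

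As a cross-check I would note that the same conclusion follows from $\tDelta=\tL^T\tL=D+A$ in \eqref{eq:01incidence and DA}: since $\rank\tL=\rank(\tL^T\tL)$ over $\R$, the nullity of $\tL$ coincides with that of the signless Laplacian $D+A$, whose smallest eigenvalue vanishes exactly in the bipartite case. The direct kernel computation above, however, keeps the proof self-contained and parallel to Lemma \ref{lem:rank of incidence}, so I would present that as the main line of reasoning and leave the signless-Laplacian viewpoint as a remark.
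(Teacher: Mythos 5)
Your proof is correct, and the non-bipartite half is essentially identical to the paper's: an odd cycle forces $x_{v_0}=-x_{v_0}$, hence $x=0$ by connectivity. Where you genuinely diverge is the bipartite case. The paper does \emph{not} argue by propagation there; it re-orients the edges according to the bipartition to turn $\tL$ into an honest incidence matrix $L'$ of an auxiliary digraph $G'$, restricts to a spanning tree, and invokes Lemma~\ref{lem:L of tree} to get $\rank\tL\ge n_V-1$, then exhibits the explicit kernel vector $y$ (equal to $\pm 1$ on the two parts) for the reverse inequality. You instead bound $\dim\ker\tL\le 1$ directly: any $x\in\ker\tL$ is forced to satisfy $x_w=(-1)^{\ell}x_{v_0}$ along every walk of length $\ell$, and bipartiteness is exactly the parity consistency needed for a nonzero such $x$ to exist. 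Your route is more elementary and self-contained --- it needs neither Lemma~\ref{lem:L of tree} nor spanning trees, and it runs in closer parallel to the paper's own proof of Lemma~\ref{lem:rank of incidence} --- at the cost of having to establish carefully the standard equivalence ``all walks between two fixed vertices have equal length parity $\iff$ no odd cycle,'' which you rightly flag as the step needing the most care. The paper's route outsources that difficulty to the already-proved tree lemma. Your closing remark via the signless Laplacian $\tDelta=\tL^T\tL$ is consistent with how the paper actually \emph{uses} this lemma in Lemma~\ref{lem:det sigma at minus 1-u inv}, so it is an apt cross-check.
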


\begin{proof}
  Suppose $x\in\ker \tL$. 
  From the definition of the $(0,1)$-incidence matrix, the elements of $x$ satisfy $x_v + x_{v'}=0$ when $v$ and $v'$ are connected by an edge. 
  Therefore, 
  since the graph is connected, the absolute values of all elements of $x$ are the same.  
  
  Here, if $G$ has a cycle of odd length made of a permutation of vertices $(v_{i_1},\cdots,v_{i_{2k-1}})$ ($k\in\N$), in other words, if $G$ is not a bipartite graph, the corresponding elements of $x\in \ker \tL$ satisfies 
  \[
  x_{i_1} = -x_{i_2} = \cdots = x_{i_{2k-1}} = -x_{i_1}\,, 
  \]
  which yields $x=0$. 
  Therefore, $\rank{\tL}=n_V$ if $G$ is not bipartite.

  We next consider the case where $G$ is bipartite.
  In this case, we can divide the set of the vertices $V$ into $V = V_1 \cup V_2$ so that any edge connects an element in $V_1$ and an element in $V_2$. 
  We then define an $n_E\times n_V$ matrix $L'$ by 
  \begin{equation}
    L'_{ev} = 
    \begin{cases}
      -\tL_{ev} & (v \in V_1) \\
      \tL_{ev} & (v \in V_2) \\
    \end{cases}\,,
  \end{equation}
  which gives the incidence matrix of a graph $G'$ obtained by switching the directions of some edges of $G$ if it is necessary. 
  Let $T$ be a spanning tree of $G'$. 
  If we restrict the raws of $L'$ to the edges of $T$, we obtain the incidence matrix $L'_T$ of $T$ as a sub-matrix of $L'$. 
  From Lemma \ref{lem:L of tree}, 
  any $n_V-1$ column of $L'_T$ are linearly independent. 
  This means that the corresponding $n_V-1$ column of $\tL$ are linearly independent too. 
  Therefore, the rank of $\tL$ is at least $n_V-1$. 
  If we set $y\in \R^{n_V}$ so that 
  $y_v=-1$ for $v\in V_s$ and 
  $y_v=1$ for $v\in V_t$, 
  $y$ obviously satisfies $\tL y=0$. 
  Thus, we can conclude that the rank of $\tL$ is $n_V-1$. 
\end{proof}

Using these properties, we can claim the followings:
\begin{lemma}
  \label{lem:det sigma at 1-u inv}
  For a connected simple graph $G$ and $u\ne 1$, 
  the matrix $\Sigma=1-qA+q^2 Q_u$ at $q=(1-u)^{-1}$ satisfies 
  \begin{equation}
    \det\Sigma\bigr|_{q=(1-u)^{-1}} = 0\,.
    \label{eq:det Sigma at 1-u inv}
  \end{equation}
\end{lemma}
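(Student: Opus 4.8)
The plan is to evaluate the matrix $\Sigma = \bsone_{n_V} - qA + q^2 Q_u$ directly at the distinguished point $q=(1-u)^{-1}$ and to recognize that it collapses to a scalar multiple of the graph Laplacian, whose determinant is forced to vanish on a connected graph.

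First I would substitute $q=(1-u)^{-1}$ into the definition \eqref{eq:matrix Qu}, namely $Q_u = (1-u)\bigl(D-(1-u)\bsone_{n_V}\bigr) = (1-u)D - (1-u)^2\bsone_{n_V}$. The essential observation is the identity $q(1-u)=1$, which yields $q^2(1-u)=q$ and $q^2(1-u)^2 = 1$. Applying these, the quadratic term simplifies cleanly to $q^2 Q_u = q^2(1-u)D - q^2(1-u)^2\bsone_{n_V} = qD - \bsone_{n_V}$.

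Next I would assemble the full matrix. Substituting this into $\Sigma$ gives $\Sigma\bigr|_{q=(1-u)^{-1}} = \bsone_{n_V} - qA + qD - \bsone_{n_V} = q(D-A) = q\,\Delta$, where $\Delta = D-A$ is the graph Laplacian introduced in \eqref{eq:graph Laplacian}. Taking determinants, $\det\Sigma\bigr|_{q=(1-u)^{-1}} = q^{n_V}\det\Delta$, and since $u\ne 1$ ensures $q\ne 0$, the vanishing is entirely controlled by $\det\Delta$.

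Finally, I would invoke the relation $\Delta = L^T L$ from \eqref{eq:incidence and Laplacian} together with Lemma \ref{lem:rank of incidence}, which states that the incidence matrix $L$ of a connected graph has rank $n_V-1$. Since $\rank(L^T L) = \rank L = n_V-1 < n_V$, the Laplacian is singular, so $\det\Delta = 0$ and hence $\det\Sigma\bigr|_{q=(1-u)^{-1}} = 0$. There is no genuine obstacle here; the only point requiring care is the algebraic simplification that makes the $\bsone_{n_V}$ terms cancel and leaves $\Sigma$ proportional to $\Delta$, after which the conclusion follows immediately from the rank deficiency already established in Lemma \ref{lem:rank of incidence}.
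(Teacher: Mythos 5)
Your proof is correct and follows essentially the same route as the paper: both evaluate $\Sigma$ at $q=(1-u)^{-1}$ to find it reduces to $q\,\Delta = \Delta/(1-u)$, and then conclude singularity from $\Delta = L^T L$ together with Lemma \ref{lem:rank of incidence}. The only difference is that you spell out the algebraic cancellation of the identity terms, which the paper leaves implicit.
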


\begin{proof}
  The matrix $\Sigma$ at $q=(1-u)^{-1}$ reduces to  
  \begin{equation}
    \Sigma\bigr|_{q=(1-u)^{-1}} = \left(1-qA+q^2 Q_u\right)\Bigr|_{q=(1-u)^{-1}} = \frac{\Delta}{1-u}\,,
    \label{eq:Sigma at critical}
  \end{equation} 
  where $\Delta$ is the graph Laplacian defined by \eqref{eq:graph Laplacian}.
  Since $\Delta$ is expressed as $\Delta=L^T L$ by the incidence matrix $L$ and the rank of $L$ is $n_V-1$ from Lemma \ref{lem:rank of incidence}, the rank of $\Delta$ is $n_V-1$ too. 
  Therefore, \eqref{eq:det Sigma at 1-u inv} is satisfied. 
\end{proof}

\begin{lemma}
  \label{lem:det sigma at minus 1-u inv}
  For a connected simple graph $G$ and $u\ne 1$, 
  the matrix $\Sigma=1-qA+q^2 Q_u$ at $q=-(1-u)^{-1}$ satisfies 
  \begin{equation}
    \begin{split}
    \det\Sigma\bigr|_{q=-(1-u)^{-1}} &= 0  \quad  \text{if $G$ is bipartite} \\ 
    \det\Sigma\bigr|_{q=-(1-u)^{-1}} &\ne 0  \quad \text{otherwise} 
    \end{split}
  \label{eq:det Sigma at minus 1-u inv}
  \end{equation}
\end{lemma}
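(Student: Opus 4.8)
The plan is to mirror the argument of Lemma \ref{lem:det sigma at 1-u inv}, exploiting the fact that flipping the sign of $q$ trades the graph Laplacian $\Delta = D - A$ for the signless Laplacian $\tDelta = D + A$. First I would substitute $q = -(1-u)^{-1}$, so that $q^2 = (1-u)^{-2}$, directly into $\Sigma = \bsone_{n_V} - qA + q^2 Q_u$. Using the definition \eqref{eq:matrix Qu} of $Q_u = (1-u)(D - (1-u)\bsone_{n_V})$, the term $q^2 Q_u$ becomes $\frac{1}{1-u}(D - (1-u)\bsone_{n_V}) = \frac{D}{1-u} - \bsone_{n_V}$, while $-qA = \frac{A}{1-u}$. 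The two $\bsone_{n_V}$ contributions cancel, leaving
\[
  \Sigma\bigr|_{q=-(1-u)^{-1}} = \frac{D + A}{1-u} = \frac{\tDelta}{1-u}\,,
\]
where $\tDelta$ is the signless Laplacian introduced in \eqref{eq:01incidence and DA}.

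The next step is to read off the rank. Since $\tDelta = \tL^T \tL$ by \eqref{eq:01incidence and DA}, we have $\rank \tDelta = \rank \tL$, and Lemma \ref{lem:rank of 01-incidence} tells us this rank is $n_V - 1$ when $G$ is bipartite and $n_V$ otherwise. Because $u \ne 1$, the scalar factor $(1-u)^{-1}$ is nonzero and does not affect whether $\det \Sigma$ vanishes. Hence $\det \Sigma|_{q=-(1-u)^{-1}} = 0$ in the bipartite case (rank deficient) and $\det \Sigma|_{q=-(1-u)^{-1}} \ne 0$ otherwise (full rank), which is exactly \eqref{eq:det Sigma at minus 1-u inv}.

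There is essentially no obstacle here beyond the bookkeeping in the substitution; all the real work has already been done in Lemma \ref{lem:rank of 01-incidence}, where the distinction between bipartite and non-bipartite graphs enters through the kernel of $\tL$. The one point worth emphasizing is that the bipartite/non-bipartite dichotomy for the pole at $q = -(1-u)^{-1}$ is the direct counterpart of the unconditional rank-$(n_V-1)$ statement for $\Delta$ at $q = (1-u)^{-1}$: the sign flip in $q$ converts $\Delta = L^T L$ into $\tDelta = \tL^T \tL$, and whereas $L$ always has a one-dimensional kernel (the all-ones vector) on a connected graph, $\tL$ has a nontrivial kernel only when $G$ admits the alternating $\pm 1$ vertex labeling characteristic of a bipartite graph.
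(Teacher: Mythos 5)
Your proposal is correct and follows essentially the same route as the paper: substitute $q=-(1-u)^{-1}$ to get $\Sigma = \tDelta/(1-u)$ with $\tDelta = \tL^T\tL$, then invoke Lemma \ref{lem:rank of 01-incidence} to read off the rank dichotomy between bipartite and non-bipartite graphs. The explicit bookkeeping of the substitution and the remark contrasting $\ker L$ with $\ker\tL$ are consistent with the paper's argument.
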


\begin{proof}
  The matrix $\Sigma$ at $q=-(1-u)^{-1}$ reduces to 
  \begin{equation}
    \Sigma\bigr|_{q=-(1-u)^{-1}} = \left(1-qA+q^2 Q_u\right)\Bigr|_{q=-(1-u)^{-1}} = \frac{\tDelta}{1-u}\,,
    \label{eq:Sigma at critical 2}
  \end{equation}
  where $\tDelta$ is defined by \eqref{eq:01incidence and DA}.
  Since $\tDelta$ can be written as $\tDelta = \tL^T \tL$ through the $(0,1)$-incidence matrix $\tL$,
  the rank of $\tDelta$ is $n_V-1$ when $G$ is bipartite and $n_V$ otherwise from Lemma \ref{lem:rank of 01-incidence}.
  Therefore, \eqref{eq:det Sigma at minus 1-u inv} is satisfied. 
\end{proof}

Since the Bartholdi zeta function $\zeta_G(q,u)$ is expressed as \eqref{eq:vertex Bartholdi}, 
it follows from Lemma \ref{lem:det sigma at 1-u inv} and Lemma \ref{lem:det sigma at minus 1-u inv} with respect to the order of the poles at $q=\pm(1-u)^{-1}$ for a connected simple graph $G$ of $n_E\ge n_V$ (see Appendix \ref{app:trivial poles} for another discussion using the Hashimoto expression \eqref{eq:edge Bartholdi}): 
\begin{itemize}
  \item $\zeta_G(q,u)$ has a pole at $q=(1-u)^{-1}$ whose order is at least $n_E-n_V+1$.
  \item When $G$ is bipartite, $\zeta_G(q,u)$ has a pole at $q=-(1-u)^{-1}$ whose order is at least $n_E-n_V+1$.
  \item When $G$ is not bipartite, $q=-(1-u)^{-1}$ is a pole of the order $n_E-n_V$ if $n_E>n_V$ and is not a pole if $n_E=n_V$. 
\end{itemize}

In the following, we determine the order of the poles at $q=\pm(1-u)^{-1}$. 
As mentioned above, for $q=(1-u)^{-1}$, only the minimum order of the pole is determined as $n_E-n_V+1$, but the upper limit has not yet been determined.
On the other hand, for $q=-(1-u)^{-1}$, 
the order of the pole is determined exactly to be $n_E-n_V$ when $G$ is not bipartite. 
Therefore, for $q=-(1-u)^{-1}$, 
we are interested only in the case where $G$ is bipartite. 
However, we can restrict our consideration to $q=(1-u)^{-1}$ 
since the order of $q=-(1-u)^{-1}$ must be the same with that of $q=(1-u)^{-1}$ when $G$ is bipartite from the following lemma:
\begin{lemma}
  \label{lem:even zeta for bipartite}
  The Bartholdi zeta function $\zeta_G(q,u)$ on a bipartite graph $G$ is an even function with respect to $q$. 
\end{lemma}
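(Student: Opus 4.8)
The plan is to work from the vertex expression \eqref{eq:vertex Bartholdi} rather than from the Euler product. There one has $\zeta_G(q,u) = \bigl(1-(1-u)^2q^2\bigr)^{-(n_E-n_V)}\det(\Sigma)^{-1}$ with $\Sigma = \bsone_{n_V}-qA+q^2 Q_u$. The scalar prefactor depends on $q$ only through $q^2$, so it is manifestly invariant under $q\to -q$; hence the whole claim reduces to showing that $\det\Sigma$ is an even function of $q$.

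First I would exploit the bipartite structure of $G$. Writing $V = V_1 \sqcup V_2$ for the bipartition, I define the diagonal signature matrix $P = \diag(\varepsilon_1,\ldots,\varepsilon_{n_V})$ with $\varepsilon_v=+1$ for $v\in V_1$ and $\varepsilon_v=-1$ for $v\in V_2$. Since every edge of a bipartite graph joins $V_1$ to $V_2$, the adjacency matrix has no nonzero entries within a single part. Checking the three cases ($v,v'$ both in $V_1$, both in $V_2$, or in different parts) then yields the key identity $PAP = -A$, which is precisely the algebraic fingerprint of bipartiteness.

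Next I would note that $Q_u$ and $\bsone_{n_V}$ are diagonal, so they commute with $P$, and using $P^2=\bsone_{n_V}$ they satisfy $P Q_u P = Q_u$ and $P\bsone_{n_V}P = \bsone_{n_V}$. Conjugating $\Sigma$ by $P$ therefore flips the sign of only the linear term,
\[
  P\Sigma P = \bsone_{n_V} + qA + q^2 Q_u = \Sigma\bigr|_{q\to -q}\,.
\]
Since $\det(P\Sigma P) = (\det P)^2\det\Sigma = \det\Sigma$, I conclude $\det\Sigma\bigr|_{q\to -q} = \det\Sigma$, so $\det\Sigma$ is even in $q$. Combined with the even prefactor this gives $\zeta_G(-q,u)=\zeta_G(q,u)$.

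This argument is essentially computational, and I do not expect a serious obstacle; the only substantive point is the identity $PAP=-A$. If one instead prefers the combinatorial definition, the same conclusion follows from the fact that every cycle in a bipartite graph has even length, so each term $q^{|C|}$ in the log-expansion \eqref{eq:Bartholdi exp} carries an even power of $q$; but the determinant argument is cleaner, and I would present it as the main proof.
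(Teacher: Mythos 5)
Your proof is correct, and it takes a genuinely different route from the paper's. The paper also reduces the claim to the evenness of $\det\Sigma$, but it does so by ordering the vertices so that $A=\begin{pmatrix}0 & B\\ B^T & 0\end{pmatrix}$ and then factoring $\det\Sigma$ via a Schur complement, observing that both resulting factors depend on $q$ only through $q^2$. Your argument instead conjugates $\Sigma$ by the signature matrix $P=\diag(\varepsilon_1,\ldots,\varepsilon_{n_V})$ and uses $PAP=-A$ together with $PQ_uP=Q_u$ to get $P\Sigma P=\Sigma\bigr|_{q\to-q}$ directly. The two proofs exploit the same structural fact (no edges within a part), but yours is cleaner: it avoids the implicit invertibility of the block $\bsone_{|V_1|}+q^2Q_{u,1}$ that the Schur-complement factorization requires (harmless in the paper, since it holds for generic $q$ and the identity of polynomials extends everywhere, but it is an unstated step), and it is the standard spectral-symmetry argument for bipartite graphs, so it transfers immediately to any matrix of the form $\bsone - qA + q^2(\text{diagonal})$. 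Your fallback observation that all cycles in a bipartite graph have even length would also work directly from the log-expansion \eqref{eq:Bartholdi exp}, and is in some ways the most conceptual proof, though it requires a word about analytic continuation beyond the domain of convergence; the determinant argument sidesteps that.
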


\begin{proof}
  When $G$ is bipartite, we can separate the set of vertices into $V=V_1 \cup V_2$ so that any edge connects a vertex in $V_1$ and a vertex in $V_2$. 
  Therefore, by arranging the elements of $V$ appropriately, we can express the adjacency matrix as 
  \begin{equation}
    A = \begin{pmatrix}
      0 & B \\ B^T & 0 
    \end{pmatrix}\,,
  \end{equation}
  where $B$ is a $(0,1)$-matrix of size $|V_1|\times |V_2|$. 
  In this setting, the matrix $\Sigma$ can be written as 
  \begin{equation}
    \Sigma = \begin{pmatrix}
      \bsone_{|V_1|} + q^2 Q_{u,1} & -q B \\ -q B^T &  \bsone_{|V_2|} + q^2 Q_{u,2}
    \end{pmatrix}\,,
  \end{equation}
  where
  \[
    Q_{u,a}=\diag((1-u)(\deg v_1^a-1+u),
    \cdots, (1-u)(\deg v_{|V_a|}^a-1+u)),
  \]
  for $a=1,2$ and $v_i^a\in V_a$.
  Since the determinant of $\Sigma$ can be evaluated as 
  \begin{equation}
    \det\Sigma = \det\left( \bsone_{|V_1|} + q^2 Q_{u,1}\right)
    \det\left(
      \bsone_{|V_2|} + q^2 Q_{u,2} 
      + q^2 B^T \left( \bsone_{|V_1|} + q^2 Q_{u,1}\right)^{-1} B
    \right)\,,
  \end{equation}
  $\det\Sigma$ is an even function of $q$. 
  From the expression \eqref{eq:vertex Bartholdi}, $\zeta_G(q,u)$ is an even function of $q$ too.
\end{proof}

Let us consider the behavior of $\det\Sigma$ at $q=(1-u)^{-1}$ in detail. 
From Lemma \ref{lem:det sigma at 1-u inv}, $q=(1-u)^{-1}$ is a zero of $\det\Sigma$ in general. 
To determine the order, we have to estimate the derivative of $\det\Sigma$ with respect to $q$. 
To this end, the following well-known theorem is useful: 
\begin{theorem}[Matrix-Tree theorem]
 \label{th:matrix-tree} 
 The cofactor of any element of the graph Laplacian $\Delta$ is the number of the spanning tree of the graph. 
\end{theorem}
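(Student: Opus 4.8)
The plan is to prove the stronger and more standard statement that \emph{every} cofactor of the graph Laplacian $\Delta$ equals the number $\tau(G)$ of spanning trees of $G$, from which the stated claim follows immediately. I would split the argument into two parts: first evaluate the diagonal cofactors, then show that all cofactors coincide.

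For a diagonal cofactor $C_{vv}$, which is the principal minor obtained by deleting the $v$-th row and column of $\Delta$, I would use the factorization $\Delta = L^T L$ from \eqref{eq:incidence and Laplacian}. Let $\hat L_v$ denote the $n_E\times(n_V-1)$ matrix obtained by deleting the $v$-th column of the incidence matrix $L$. Deleting the $v$-th row and column of $\Delta=L^TL$ produces exactly $\hat L_v^T\hat L_v$, so $C_{vv}=\det(\hat L_v^T\hat L_v)$. Applying the Cauchy--Binet formula to this product expresses the cofactor as
\[
  C_{vv}=\sum_{S}\det\bigl((\hat L_v)_S\bigr)^2,
\]
where $S$ runs over all $(n_V-1)$-element subsets of edges and $(\hat L_v)_S$ is the corresponding $(n_V-1)\times(n_V-1)$ submatrix of $\hat L_v$.

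The key step is to identify each summand. I would first establish that $L$ is totally unimodular, i.e.\ every square submatrix has determinant in $\{-1,0,1\}$, by induction on the submatrix size using that each row of $L$ carries exactly one $+1$ and one $-1$: a square submatrix either has an all-zero row (determinant $0$), a row with a single nonzero entry (expand along it and induct), or only rows with two nonzero entries, in which case its columns sum to zero and the determinant vanishes. Next I would argue that $\det((\hat L_v)_S)\ne 0$ precisely when the edge set $S$ is a spanning tree: if $S$ is a spanning tree, then Lemma~\ref{lem:L of tree} applied to this tree shows the submatrix is invertible, hence $\pm1$ by total unimodularity; if $S$ contains a cycle, the signed sum of the incidence rows around the cycle is a linear dependence that survives the column deletion, forcing the determinant to vanish. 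Since each spanning tree contributes $(\pm1)^2=1$, we obtain $C_{vv}=\tau(G)$.

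Finally, to pass to arbitrary cofactors I would use that $\Delta\mathbf{1}=0$ and $\mathbf{1}^T\Delta=0$, so $\det\Delta=0$ and the adjugate satisfies $\Delta\,\mathrm{adj}(\Delta)=\mathrm{adj}(\Delta)\,\Delta=0$. By Lemma~\ref{lem:rank of incidence} the kernel of $\Delta$ is one-dimensional and spanned by $\mathbf{1}$, so every column of $\mathrm{adj}(\Delta)$ is a multiple of $\mathbf{1}$ and, by the symmetry of $\Delta$, every row is a multiple of $\mathbf{1}^T$; hence all entries of $\mathrm{adj}(\Delta)$ are equal, i.e.\ all cofactors coincide. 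Combined with the diagonal computation, every cofactor equals $\tau(G)$. I expect the main obstacle to be the combinatorial heart of the diagonal case, namely the identification of the nonvanishing minors with spanning trees through total unimodularity and the cycle/tree dichotomy, whereas the reduction from a single cofactor to all cofactors via the adjugate is routine.
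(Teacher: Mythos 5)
Your proof is correct. The paper does not prove this theorem itself --- it simply cites a standard reference (Bapat's book) --- so there is no internal proof to compare against; your argument is the classical one found in such references, combining the factorization $\Delta = L^T L$, the Cauchy--Binet formula, total unimodularity of the incidence matrix, and the identification of nonvanishing $(n_V-1)\times(n_V-1)$ minors with spanning trees (for which Lemma~\ref{lem:L of tree} supplies exactly the needed invertibility). Your final step, deducing that all cofactors coincide from $\Delta\,\mathrm{adj}(\Delta)=0$, the one-dimensionality of $\ker\Delta$, and the symmetry of $\Delta$, reproduces verbatim the paper's own remark immediately following the theorem statement.
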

For a proof, see e.g.~\cite{bapat2010graphs}. 
The cofactor is common to all elements of $\Delta$ because it has a unique zero-mode $(1,\cdots,1)^T$. 
In fact, since $\det \Delta=0$, the columns of the classical adjoint matrix of $\Delta$ must be proportional to $(1,\cdots,1)^T$. 
In addition, since $\Delta$ is a symmetric matrix, the classical adjoint matrix of $\Delta$ is also a symmetric matrix. 
Then, the cofactor of $\Delta$ does not depend on the elements.

Let the classical adjoint (adjugate) matrix of $\Sigma$ be denoted by $\adjSigma$.
Using the matrix-tree theorem, we can show the following:
\begin{lemma}
  \label{lem:tSigma at critical}
  When $q=(1-u)^{-1}$, $\adjSigma$ reduces to 
  \begin{equation}
    \adjSigma\big|_{q=(1-u)^{-1}} = \frac{\kappa}{(1-u)^{n_V-1}}\begin{pmatrix}
      1 & \cdots & 1 \\
      \vdots & \ddots & \vdots \\
      1 & \cdots & 1
    \end{pmatrix}\,,
    \label{eq:tSigma at critical}
  \end{equation}
  where $\kappa$ is the number of spanning trees of $G$. 
\end{lemma}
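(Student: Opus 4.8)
The plan is to reduce the whole computation to the adjugate of the graph Laplacian and then invoke the Matrix-Tree theorem. The starting point is already available: by \eqref{eq:Sigma at critical} in the proof of Lemma \ref{lem:det sigma at 1-u inv}, we have $\Sigma\big|_{q=(1-u)^{-1}} = \Delta/(1-u)$. Since every entry of the adjugate is a signed $(n_V-1)\times(n_V-1)$ minor, the adjugate is homogeneous of degree $n_V-1$ under scalar multiplication, i.e.\ $\operatorname{adj}(cM)=c^{\,n_V-1}\operatorname{adj}(M)$ for any scalar $c$. Applying this with $c=(1-u)^{-1}$ and $M=\Delta$ gives
\[
  \adjSigma\big|_{q=(1-u)^{-1}} = \frac{1}{(1-u)^{n_V-1}}\operatorname{adj}(\Delta),
\]
so the problem collapses to identifying $\operatorname{adj}(\Delta)$.

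Next I would evaluate $\operatorname{adj}(\Delta)$ using Theorem \ref{th:matrix-tree} together with the structural remarks recorded just before the lemma statement. Because $\det\Delta=0$ and $\ker\Delta$ is the one-dimensional space spanned by $(1,\dots,1)^T$ (Lemma \ref{lem:rank of incidence}), the identity $\Delta\cdot\operatorname{adj}(\Delta)=(\det\Delta)\,\bsone_{n_V}=0$ forces every column of $\operatorname{adj}(\Delta)$ to be proportional to $(1,\dots,1)^T$; symmetry of $\Delta$ makes $\operatorname{adj}(\Delta)$ symmetric as well, so all of its entries share a single common value. That common value is a cofactor of $\Delta$, which the Matrix-Tree theorem identifies with $\kappa$, the number of spanning trees of $G$. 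Hence $\operatorname{adj}(\Delta)$ is exactly $\kappa$ times the all-ones matrix, and substituting into the previous display produces \eqref{eq:tSigma at critical}.

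The only point requiring genuine care is the sign bookkeeping in the cofactors: one must be sure that the \emph{signed} cofactors $(-1)^{i+j}\det\Delta^{(ij)}$, and not merely the principal minors, are all equal to $+\kappa$, so that the adjugate is the all-ones matrix rather than one carrying alternating signs. This is precisely what the kernel-plus-symmetry argument delivers, since it pins down both the proportionality structure and the uniform common value simultaneously; no separate sign computation is then needed, and the homogeneity step supplies the prefactor $(1-u)^{-(n_V-1)}$.
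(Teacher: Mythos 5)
Your proposal is correct and follows essentially the same route as the paper: reduce to $\Sigma|_{q=(1-u)^{-1}}=\Delta/(1-u)$, use the degree-$(n_V-1)$ homogeneity of the adjugate, and identify $\operatorname{adj}(\Delta)$ as $\kappa$ times the all-ones matrix via the Matrix-Tree theorem together with the kernel-plus-symmetry argument (which the paper records in the paragraph just before the lemma). Your extra remark on sign bookkeeping is a fair point of care, and your resolution of it is the same one the paper implicitly relies on.
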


\begin{proof}
  When $q=(1-u)^{-1}$, 
  $\Sigma$ is proportional to the graph Laplacian as \eqref{eq:Sigma at critical}. 
  Therefore, 
  \eqref{eq:tSigma at critical} immediately obeys 
  from the matrix-tree theorem. 
\end{proof}

In the following, we define 
\begin{equation}
  u_*\equiv 1-\frac{\bar{d}}{2}\,,
\end{equation}
where 
\begin{equation}
  \bar{d}\equiv \frac{1}{n_V}\sum_{i=1}^{n_V}d_i = \frac{2n_E}{n_V}\,
\end{equation}
is the average of the degrees of the vertices.

Using Lemma \ref{lem:tSigma at critical}, 
we can show the following lemmas in turn: 
\begin{lemma}
  \label{lem:det prime}
  $\det\Sigma$ is expanded around $q=(1-u)^{-1}$ as 
  \begin{equation}
    \det\Sigma = \delta q  \left( \frac{2 \kappa n_V}{(1-u)^{n_V-1}} \left(u-u_*\right) \right) + {\cal O}(\delta q^2)\,,
    \label{eq:detSigma at critical}
  \end{equation}
\end{lemma}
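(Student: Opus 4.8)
The plan is to expand $\det\Sigma$ to first order in $\delta q \equiv q - (1-u)^{-1}$ using the standard derivative formula for determinants. Recall that for a matrix $M(q)$,
\[
  \frac{d}{dq}\det M = \tr\!\left(\adjSigma \cdot \frac{dM}{dq}\right)\Big|_{M=\Sigma}
  = \tr\!\left(\adjSigma \cdot \Sigma'\right)\,,
\]
where $\Sigma' \equiv d\Sigma/dq$ and $\adjSigma$ is the adjugate. Since $\det\Sigma$ vanishes at $q=(1-u)^{-1}$ by Lemma \ref{lem:det sigma at 1-u inv}, the linear term in $\delta q$ is governed by $\tr(\adjSigma\,\Sigma')$ evaluated at the critical point, and Lemma \ref{lem:tSigma at critical} has already pinned down $\adjSigma$ there explicitly as $\frac{\kappa}{(1-u)^{n_V-1}}$ times the all-ones matrix. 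So the first step is simply to differentiate $\Sigma = \bsone_{n_V} - qA + q^2 Q_u$ to get $\Sigma' = -A + 2q Q_u$, and then evaluate it at $q=(1-u)^{-1}$.

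**Evaluating the trace.**
Next I would compute $\tr(\adjSigma\,\Sigma')$ at $q=(1-u)^{-1}$. Because $\adjSigma$ is proportional to the all-ones matrix $\mathbf{1}\mathbf{1}^T$ (writing $\mathbf{1}=(1,\cdots,1)^T$), the trace collapses to
\[
  \tr\!\left((\mathbf{1}\mathbf{1}^T)\,\Sigma'\right) = \mathbf{1}^T \Sigma' \mathbf{1}
  = \sum_{v,v'} (\Sigma')_{vv'}\,,
\]
i.e.\ the sum of all entries of $\Sigma'$. At the critical point, $\Sigma'|_{q=(1-u)^{-1}} = -A + \frac{2}{1-u}Q_u$. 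The sum of all entries of $A$ is $\sum_v \deg v = 2n_E$. For $Q_u = (1-u)(D-(1-u)\bsone)$, which is diagonal, the sum of all entries is $\sum_v (1-u)(d_v - (1-u)) = (1-u)(2n_E - (1-u)n_V)$. Assembling these with the prefactor $\frac{\kappa}{(1-u)^{n_V-1}}$ and the factor $\frac{2}{1-u}$ on the $Q_u$ term, the computation should reorganize into $\frac{2\kappa n_V}{(1-u)^{n_V-1}}(u-u_*)$. The only genuine task here is to verify that the combination $-2n_E + 2(2n_E - (1-u)n_V) = 2n_E - 2(1-u)n_V$ factors as $2n_V\big((\tfrac{n_E}{n_V}) - (1-u)\big)$, and to confirm this equals $2n_V(u-u_*)$ upon substituting $u_* = 1 - \bar d/2 = 1 - n_E/n_V$; matching the paper's definition $u_* \equiv 1-\bar d/2$ with $\bar d = 2n_E/n_V$ makes this immediate.

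**The main obstacle and a remark.**
The computation is essentially routine once Lemma \ref{lem:tSigma at critical} is in hand, so the only point requiring care is bookkeeping: tracking the factor of $q=(1-u)^{-1}$ that enters $\Sigma'$ through the $2qQ_u$ term, and not double-counting the $(1-u)$ factors already present inside $Q_u$ itself. The one conceptual subtlety worth flagging is the implicit interchange of taking the adjugate and differentiating: I am using that $\frac{d}{dq}\det\Sigma = \tr(\adjSigma\,\Sigma')$ holds as an identity of polynomials in $q$, and only afterward specializing to $q=(1-u)^{-1}$, which is legitimate since all entries are polynomials. I would then conclude that the linear coefficient is exactly $\frac{2\kappa n_V}{(1-u)^{n_V-1}}(u-u_*)$, establishing \eqref{eq:detSigma at critical}. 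As a sanity check, this coefficient vanishes precisely at $u=u_*$, which is the expected signal that the order of the zero of $\det\Sigma$ (and hence the order of the pole of $\zeta_G$) jumps at $u=u_*$, consistent with the enhancement phenomenon announced in Theorem \ref{th:enhancement}.
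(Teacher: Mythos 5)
Your proposal is correct and follows essentially the same route as the paper: both use Jacobi's formula $\frac{\partial}{\partial q}\det\Sigma = \Tr(\adjSigma\,\Sigma')$ together with Lemma \ref{lem:tSigma at critical}, and both reduce the trace to the sum $\sum_i\bigl(d_i-2(1-u)\bigr)=n_V(\bar d-2(1-u))=2n_V(u-u_*)$. The only cosmetic difference is that you collapse the trace via $\mathbf{1}^T\Sigma'\mathbf{1}$ while the paper writes out the matrix product $\adjSigma\,\Sigma'$ explicitly before tracing; the computations are identical.
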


\begin{proof}
  Since $\det \Sigma = e^{\Tr\log \Sigma}$, the derivative of $\det \Sigma$ by $q$ is expressed as 
  \begin{equation}
    \frac{\partial}{\partial q}\left( \det \Sigma \right)
    = \Tr\left(\adjSigma \Sigma'\right)\,.
  \end{equation}
  $\adjSigma$ and $\Sigma'$ at $q=(1-u)^{-1}$ are given by \eqref{eq:tSigma at critical} and 
  \begin{equation}
    \Sigma'\big|_{q=(1-u)^{-1}} = 2D -A -2(1-u)\bsone_{n_V}\,, 
    \label{eq:Sigma' at critical}
  \end{equation}
  respectively. 
  Then, using $A(1,\cdots,1)^T = D(1,\cdots,1)^T=(d_1,\cdots,d_{n_V})^T$, we obtain 
  \begin{align*}
    \adjSigma \Sigma' \big|_{q=(1-u)^{-1}} 
      = \frac{\kappa}{(1-u)^{n_V-1}}\begin{pmatrix}
      d_1 -2(1-u) & \cdots & d_{n_V} - 2(1-u) \\
      d_1 -2(1-u) & \cdots & d_{n_V} - 2(1-u) \\
      \vdots & \vdots & \vdots \\
      d_1 -2(1-u) & \cdots & d_{n_V} - 2(1-u) 
    \end{pmatrix}\,.
  \end{align*}
  Since $\sum_{i=1}^{n_V} d_i = n_V \bar{d}$ and $\bar{d}-2(1-u) = 2(u-u_*)$ from the definition of $u_*$, we obtain \eqref{eq:detSigma at critical}.
\end{proof}

\begin{lemma}
  \label{lem:Delta_tSigma at critical}
 The product of the graph Laplacian and $\adjSigma'$ at $q=(1-u)^{-1}$ is given by 
 \begin{align}
  \mathscale{0.75}{
  \Delta \adjSigma'\Bigr|_{q=(1-u)^{-1}}
  = \frac{2\kappa}{(1-u)^{n_V-2}}\begin{pmatrix}
    n_V(u-u_*)-(u-u_*^1) & -(u-u_*^1) & \cdots & -(u-u_*^1) \\
    -(u-u_*^2) & n_V(u-u_*)-(u-u_*^2) & \cdots & -(u-u_*^2) \\
    \vdots & \vdots & \ddots & \vdots \\
    -(u-u_*^{n_V}) & -(u-u_*^{n_V}) & \cdots & n_V(u-u_*)-(u-u_*^{n_V})
  \end{pmatrix}
  }
  \label{eq:Delta_tSigma at critical}
 \end{align}
 where $u_*^i \equiv 1-\frac{d_i}{2}$.
\end{lemma}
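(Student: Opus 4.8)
The plan is to exploit the adjugate identity $\Sigma\,\adjSigma = \det(\Sigma)\,\bsone_{n_V}$ and differentiate it with respect to $q$. Since $\Sigma = \bsone_{n_V}-qA+q^2 Q_u$ is symmetric, $\adjSigma$ is symmetric as well, and the product $\Delta\adjSigma'$ that the lemma asks for emerges naturally once we evaluate at the critical point $q=(1-u)^{-1}$, where $\Sigma=\Delta/(1-u)$ by \eqref{eq:Sigma at critical}.

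First I would differentiate the identity to get
\begin{equation*}
  \Sigma'\,\adjSigma + \Sigma\,\adjSigma' = (\det\Sigma)'\,\bsone_{n_V}\,.
\end{equation*}
Evaluating at $q=(1-u)^{-1}$ and using $\det\Sigma=0$ there (Lemma \ref{lem:det sigma at 1-u inv}) together with $\Sigma=\Delta/(1-u)$, the second term becomes $\Sigma\,\adjSigma'=\Delta\adjSigma'/(1-u)$, so I can solve for the target product:
\begin{equation*}
  \Delta\adjSigma'\Bigr|_{q=(1-u)^{-1}} = (1-u)\left[(\det\Sigma)'\,\bsone_{n_V} - \Sigma'\,\adjSigma\right]\Bigr|_{q=(1-u)^{-1}}\,.
\end{equation*}
Every ingredient on the right is already known: $(\det\Sigma)'=\frac{2\kappa n_V}{(1-u)^{n_V-1}}(u-u_*)$ from Lemma \ref{lem:det prime}, $\adjSigma$ equals $\kappa/(1-u)^{n_V-1}$ times the all-ones matrix from Lemma \ref{lem:tSigma at critical}, and $\Sigma'=2D-A-2(1-u)\bsone_{n_V}$ from \eqref{eq:Sigma' at critical}.

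The key computation is the product $\Sigma'\adjSigma$. Because $\adjSigma$ is proportional to the all-ones matrix, right-multiplication by it replaces every entry of a given row of $\Sigma'$ by that row's sum; hence $(\Sigma'\adjSigma)_{ij}$ depends only on the row index $i$ and equals $\kappa/(1-u)^{n_V-1}$ times the $i$-th row sum of $\Sigma'$. A short calculation gives that row sum: the rows of $2D$, $-A$, and $-2(1-u)\bsone_{n_V}$ contribute $2d_i$, $-d_i$, and $-2(1-u)$ respectively, so the total is $d_i-2(1-u)=2(u-u_*^i)$ after substituting $u_*^i=1-d_i/2$. Inserting this row sum and the value of $(\det\Sigma)'$ into the displayed formula, and separating the diagonal ($i=j$) from the off-diagonal ($i\ne j$) entries, reproduces \eqref{eq:Delta_tSigma at critical} directly.

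This is essentially a bookkeeping exercise once the differentiated adjugate identity is in place, and there is no genuine obstacle. The only point that requires care is that $\adjSigma'$ is never accessible in isolation — it is the combination $\Delta\adjSigma'$ that we can compute, precisely because $\Sigma$ degenerates into a multiple of the Laplacian at the critical point, which is exactly what causes the term $\Sigma\,\adjSigma'$ in the differentiated identity to collapse to $\Delta\adjSigma'/(1-u)$.
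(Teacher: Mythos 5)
Your proposal is correct and follows essentially the same route as the paper: both extract the first-order coefficient of the identity $\Sigma\,\adjSigma = (\det\Sigma)\,\bsone_{n_V}$ at $q=(1-u)^{-1}$ (the paper via Taylor expansion in $\delta q$, you via direct differentiation, which is the same computation), and then evaluate $\Sigma'\adjSigma$ using the all-ones form of $\adjSigma$ from Lemma \ref{lem:tSigma at critical} and the row sums $d_i-2(1-u)=2(u-u_*^i)$. Your closing observation that only the combination $\Delta\adjSigma'$ is determined this way is exactly the right reading of why the lemma is stated as it is.
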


\begin{proof}
  Since $\adjSigma$ at $q=(1-u)^{-1}$ is given by \eqref{eq:tSigma at critical}, $\adjSigma$ is expanded around $q=(1-u)^{-1}$ as 
  \begin{equation}
    \adjSigma = \frac{\kappa}{(1-u)^{n_V-1}}\begin{pmatrix}
      1 & \cdots & 1 \\
      \vdots & \ddots & \vdots \\
      1 & \cdots & 1
    \end{pmatrix} + \delta q\, \adjSigma'\big|_{q=(1-u)^{-1}} + {\cal O}(\delta q^2)\,,
  \end{equation}
  where $\delta q \equiv q-(1-u)^{-1}$. 
  We can also expand $\Sigma$ as 
  \begin{equation}
    \Sigma = \frac{\Delta}{1-u} + \delta q\,\left(2D-A-2(1-u)\bsone_{n_V}\right) +{\cal O}(\delta q^2)\,. 
  \end{equation}
  Substituting them and the expansion of $\det\Sigma$ \eqref{eq:detSigma at critical} into $\Sigma \adjSigma = \left( \det\Sigma \right) \bsone_{n_V}$, 
  we immediately obtain \eqref{eq:Delta_tSigma at critical}. 
\end{proof}

\begin{lemma}
  \label{lem:MP inverse of Laplacian}
  The Moore-Penrose inverse $\Delta^+$ of the graph Laplacian $\Delta$ satisfies
  \begin{equation}
    \Delta \Delta^+ = \Delta^+ \Delta = \frac{1}{n_V}
    \begin{pmatrix}
      n_V-1 & \cdots & -1 \\
      \vdots & \ddots & \vdots \\
      -1 & \cdots & n_V-1
    \end{pmatrix}\,.
  \end{equation}
\end{lemma}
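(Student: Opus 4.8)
The plan is to use the standard characterization of the Moore--Penrose inverse of a symmetric matrix in terms of an orthogonal projection, so that no properties of $\Delta^+$ beyond its defining axioms are actually needed. Recall that for any real matrix $M$ the products $MM^+$ and $M^+M$ are the orthogonal projections onto $\im(M)$ and onto $\im(M^T)$, respectively. Since $\Delta$ is symmetric, these two ranges coincide, and hence $\Delta\Delta^+ = \Delta^+\Delta$ equals the orthogonal projection $P$ onto $\im(\Delta)$. This disposes of the first equality in the statement immediately, independently of the explicit form.

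Next I would identify that range explicitly. Writing $\Delta = L^T L$, one checks that $\ker\Delta = \ker L$: if $\Delta x = 0$ then $|Lx|^2 = x^T\Delta x = 0$, so $Lx=0$, and the reverse inclusion is trivial. By Lemma \ref{lem:rank of incidence} the kernel of $L$ is the one-dimensional line spanned by $(1,\cdots,1)^T$. Therefore $\im(\Delta) = (\ker\Delta)^\perp$ is the hyperplane of vectors whose components sum to zero, and the orthogonal projection onto it is
\begin{equation*}
  P = \bsone_{n_V} - \frac{1}{n_V}\,(1,\cdots,1)^T(1,\cdots,1)\,.
\end{equation*}

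Finally I would read off the entries of $P$: its diagonal entries are $1-\tfrac{1}{n_V}=\tfrac{n_V-1}{n_V}$ and its off-diagonal entries are $-\tfrac{1}{n_V}$, which is exactly the matrix displayed in the statement. I do not expect any genuine obstacle here. The whole argument reduces to two structural observations, namely that symmetry of $\Delta$ collapses the two one-sided projections into a single one, and that connectedness pins $\ker\Delta$ to the all-ones line (already supplied by Lemma \ref{lem:rank of incidence}); everything remaining is the one-line computation of the entries of $\bsone_{n_V}-\tfrac{1}{n_V}(1,\cdots,1)^T(1,\cdots,1)$.
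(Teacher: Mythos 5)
Your proof is correct and follows essentially the same route as the paper: both arguments reduce to the fact that $\ker\Delta$ is the all-ones line (via $\Delta=L^TL$ and Lemma \ref{lem:rank of incidence}), so that $\Delta\Delta^+=\Delta^+\Delta$ is the orthogonal projection complementary to $\frac{1}{n_V}(1,\cdots,1)^T(1,\cdots,1)$. The only cosmetic difference is that you invoke the standard characterization of $MM^+$ as the orthogonal projection onto $\im(M)$, whereas the paper re-derives that form directly from the Penrose axioms together with symmetry and idempotency of $\bsone_{n_V}-\Delta^+\Delta$.
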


\begin{proof}
  In general, the Moore-Penrose inverse $M^+$ of a (not necessarily square) matrix $M$ satisfies 
  $M M^+ M = M^+$, 
  $M^+ M M^+ = M$, 
  $(M^+ M)^T = M^+ M$ 
  and 
  $(M M^+)^T = M M^+$.
  Therefore, $\Delta$ and $\Delta^+$ satisfies $\Delta(1-\Delta^+\Delta)=0$. 
  This means that the columns of $1-\Delta^+\Delta$ are proportional to $(1,\cdots,1)^T$. 
  Since $1-\Delta^+\Delta$ is a symmetric matrix, $1-\Delta^+\Delta$ must take the form, 
  \[
    1-\Delta^+\Delta = \alpha 
    \begin{pmatrix}
      1 & \cdots & 1 \\
      \vdots & \ddots & \vdots \\
      1 & \cdots & 1
    \end{pmatrix}\,,
  \]
  with a constant $\alpha$. 
  Furthermore, since $(1-\Delta^+\Delta)^2 = 1-\Delta^+\Delta$, $\alpha$ must be $\frac{1}{n_V}$.
  $\Delta^+\Delta=\Delta\Delta^+$ is a consequence of the property of the Moore-Penrose inverse, 
  $(\Delta^+\Delta)^T = \Delta^+ \Delta$, and the fact that $\Delta$ is a symmetric matrix. 
\end{proof}

\begin{lemma}
  \label{lem:Tr tSigma' Sigma'}
  The trace of the product of $\adjSigma'=\frac{\partial\adjSigma}{\partial q}$ and
  $\Sigma'=\frac{\partial\Sigma}{\partial q}$ at $q=(1-u)^{-1}$ and $u=u_*$ is
  \begin{equation}
    \Tr\left(\adjSigma'\Sigma'\right)\Bigl|_{q=(1-u)^{-1},u=u_*} = \frac{-2\kappa}{(1-u_*)^{n_V-2}}
    \vec{d}\cdot \Delta^+ \vec{d}\,,
    \label{eq:Tr tSigma' Sigma'}
  \end{equation}
  where $\vec{d}\equiv (d_1,\cdots,d_{n_V})^T$.
\end{lemma}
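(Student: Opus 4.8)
The plan is to reduce the trace to objects already computed in Lemmas \ref{lem:Delta_tSigma at critical} and \ref{lem:MP inverse of Laplacian}. First I would record that, at the critical point $q=(1-u)^{-1}$ with $u=u_*$, the identity $2(1-u_*)=\bar{d}$ turns $\Sigma'$ from \eqref{eq:Sigma' at critical} into $\Sigma'=\Delta+E$, where $\Delta=D-A$ is the graph Laplacian and $E\equiv D-\bar{d}\,\bsone_{n_V}=\diag(d_1-\bar{d},\dots,d_{n_V}-\bar{d})$. Splitting the trace gives $\Tr(\adjSigma'\Sigma')=\Tr(\adjSigma'\Delta)+\Tr(\adjSigma' E)$. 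The first piece is immediate: by cyclicity $\Tr(\adjSigma'\Delta)=\Tr(\Delta\adjSigma')$, and setting $u=u_*$ in \eqref{eq:Delta_tSigma at critical} collapses that matrix to the rank-one form $\Delta\adjSigma'=\frac{\kappa}{(1-u_*)^{n_V-2}}\,\vec w\,\vec{1}^{\,T}$ with $\vec w\equiv\bar{d}\,\vec{1}-\vec d$ and $\vec{1}\equiv(1,\dots,1)^{T}$; since its trace is proportional to $\vec{1}^{\,T}\vec w=\sum_i(\bar{d}-d_i)=0$, we get $\Tr(\adjSigma'\Delta)=0$.

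The remaining term $\Tr(\adjSigma' E)=\sum_i(\adjSigma')_{ii}(d_i-\bar{d})$ needs the diagonal of $\adjSigma'$, which \eqref{eq:Delta_tSigma at critical} does not provide directly: from $\Delta\adjSigma'=\frac{\kappa}{(1-u_*)^{n_V-2}}\vec w\,\vec{1}^{\,T}$ one can recover $\adjSigma'$ only modulo $\ker\Delta=\mathrm{span}\,\vec{1}$. I would therefore write the general solution $\adjSigma'=\frac{\kappa}{(1-u_*)^{n_V-2}}\Delta^{+}\vec w\,\vec{1}^{\,T}+\vec{1}\,\vec y^{\,T}$, which is consistent because $\vec{1}^{\,T}\vec w=0$ places the right-hand side in the range of $\Delta$ and because $\Delta^{+}\Delta=\bsone_{n_V}-\frac{1}{n_V}\vec{1}\vec{1}^{\,T}$ from Lemma \ref{lem:MP inverse of Laplacian}. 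The undetermined column $\vec y$ is then pinned down by the symmetry of $\adjSigma'$, inherited from the symmetry of $\Sigma$: imposing $(\adjSigma')^{T}=\adjSigma'$ forces $\vec y=\frac{\kappa}{(1-u_*)^{n_V-2}}\Delta^{+}\vec w-\beta\,\vec{1}$ for some scalar $\beta$, so that
\[
\adjSigma'=\frac{\kappa}{(1-u_*)^{n_V-2}}\bigl(\Delta^{+}\vec w\,\vec{1}^{\,T}+\vec{1}\,\vec w^{\,T}\Delta^{+}\bigr)-\beta\,\vec{1}\vec{1}^{\,T}.
\]

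The scalar $\beta$ is genuinely not fixed by the data, so the crucial point—and the main obstacle—is to verify that it does not contribute: because $\Tr(\vec{1}\vec{1}^{\,T}E)=\sum_i(d_i-\bar{d})=0$, the $\beta$-term drops out of $\Tr(\adjSigma' E)$. Using $\vec{1}^{\,T}E=-\vec w^{\,T}$, $E\vec{1}=-\vec w$, and cyclicity, the two surviving pieces each evaluate to $-\frac{\kappa}{(1-u_*)^{n_V-2}}\vec w^{\,T}\Delta^{+}\vec w$, giving $\Tr(\adjSigma' E)=-\frac{2\kappa}{(1-u_*)^{n_V-2}}\vec w^{\,T}\Delta^{+}\vec w$. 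Finally, since $\Delta^{+}\vec{1}=0$ (the Moore--Penrose inverse shares the kernel of $\Delta$), the shift $\vec w=\bar{d}\,\vec{1}-\vec d$ reduces $\vec w^{\,T}\Delta^{+}\vec w$ to $\vec d^{\,T}\Delta^{+}\vec d$. Combined with $\Tr(\adjSigma'\Delta)=0$, this yields \eqref{eq:Tr tSigma' Sigma'}. I expect the kernel ambiguity of $\adjSigma'$ and the cancellation of $\beta$ to be the delicate parts; everything else is bookkeeping.
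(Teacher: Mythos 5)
Your proof is correct and follows essentially the same route as the paper: both reconstruct $\adjSigma'$ at $q=(1-u)^{-1}$, $u=u_*$ from the known rank-one matrix $\Delta\adjSigma'$ together with the symmetry of the adjugate, observe that the residual $\ker\Delta$ ambiguity is a multiple of $\vec{1}\vec{1}^{\,T}$ that drops out of the trace, and then evaluate using $\Delta^+\vec{1}=0$. Your sign bookkeeping (working with $\vec w=\bar d\,\vec 1-\vec d$ and splitting $\Sigma'=\Delta+E$ so that the $\Delta$ piece vanishes by cyclicity) is in fact somewhat more careful than the paper's intermediate displays and lands cleanly on the stated result.
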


\begin{proof}
  From \eqref{eq:Delta_tSigma at critical}, we obtain 
  \begin{align}
   \Delta \adjSigma'\Bigr|_{q=(1-u)^{-1},u=u_*}
   = \frac{-2\kappa}{(1-u)^{n_V-2}}\begin{pmatrix}
     u_*-u_*^1  & \cdots & u_*-u_*^1 \\
     u_*-u_*^2 & \cdots & u_*-u_*^2 \\
     \vdots  & \ddots & \vdots \\
     u_*-u_*^{n_V} & \cdots & u_*-u_*^{n_V}
   \end{pmatrix}\,,
   \label{eq:Delta_tSigma at critical and u*}
  \end{align}
  by setting $u=u_*$. 
  On the other hand, from Lemma \ref{lem:MP inverse of Laplacian}, 
  the matrix $\Delta\Delta^+$ acts on $\vec{d}$ as 
  \begin{align}
    \Delta \Delta^+ \vec{d} = 
    \begin{pmatrix}
      d_1 - \bar{d} \\ 
      d_2 - \bar{d} \\ 
      \vdots \\
      d_{n_V} - \bar{d} 
    \end{pmatrix}
    = 
    -2 \begin{pmatrix}
      u_*-u^1_* \\ 
      u_*-u^2_* \\ 
      \vdots \\
      u_*-u^{n_V}_* 
    \end{pmatrix}
    \label{eq:tmp1}
  \end{align} 
  Comparing \eqref{eq:Delta_tSigma at critical and u*} and \eqref{eq:tmp1},
  the symmetric matrix $\adjSigma'$ at $q=(1-u)^{-1}$ and $u=u_*$ must take the form, 
  \begin{equation}
    \mathscale{0.85}{
    \adjSigma'\Bigr|_{q=(1-u)^{-1},u=u_*} 
    = 
    \frac{\kappa}{(1-u)^{n_V-2}}\left(
      \Delta^+ 
      \begin{pmatrix}
        d_1 & \cdots & d_{1} \\
        \vdots & \ddots & \vdots \\
        d_{n_V} & \cdots & d_{n_V} 
      \end{pmatrix}
      +
      \begin{pmatrix}
        d_1 & \cdots & d_{n_V} \\
        \vdots & \ddots & \vdots \\
        d_1 & \cdots & d_{n_V} 
      \end{pmatrix}
      \Delta^+
      + \beta 
      \begin{pmatrix}
        1 & \cdots & 1 \\
        \vdots & \ddots & \vdots \\
        1 & \cdots & 1
      \end{pmatrix}
    \right)\,,
    }
    \label{eq:tmp2}
  \end{equation}
  with a constant $\beta$. 
  From \eqref{eq:Sigma' at critical}, $\Sigma'$ at $q=(1-u)^{-1}$ and $u=u_*$ becomes 
  \begin{equation}
    \Sigma'_{*}\equiv \Sigma'\Big|_{q=(1-u)^{-1},u=u_*} = 2D -A -2(1-u_*)\bsone_{n_V}\,, 
  \end{equation}
  which acts to the vectors $(1,\cdots,1)^T$ as 
  \begin{equation}
    \Sigma'_* \begin{pmatrix} 1 \\ \vdots \\ 1 \end{pmatrix} 
    = \begin{pmatrix} d_1 - \bar{d} \\ \vdots \\ d_{n_V} - \bar{d} \end{pmatrix}\,.
  \end{equation}
  Therefore, we can estimate 
  \begin{equation}
    \Sigma'_*
    \begin{pmatrix}
      1 & \cdots & 1 \\
      \vdots & \ddots & \vdots \\
      1 & \cdots & 1
    \end{pmatrix} 
    = 
    \begin{pmatrix}
      d_1-\bar{d} & \cdots & d_1-\bar{d} \\   
      \vdots & \ddots & \vdots \\
      d_{n_V}-\bar{d} & \cdots & d_{n_V}-\bar{d} \\   
    \end{pmatrix}\,, 
  \end{equation}
  and 
  \begin{align*}
    \Sigma'_* 
    &\begin{pmatrix}
      d_1 & \cdots & d_{n_V} \\
      \vdots & \ddots & \vdots \\
      d_1 & \cdots & d_{n_V} 
    \end{pmatrix}
    +
    \begin{pmatrix}
      d_1 & \cdots & d_{n_1} \\
      \vdots & \ddots & \vdots \\
      d_{n_V} & \cdots & d_{n_V} 
    \end{pmatrix}
    \Sigma'_* \\
    &= 
      \begin{pmatrix}
        2d_1^2 -2 \bar{d}d_1 & 2d_1d_2 -\bar{d}(d_1+d_2) & \cdots & 2d_1d_{n_V}-\bar{d}(d_1+d_{n_V}) \\
        2d_2d_1 -\bar{d}(d_2+d_1) & 2d_2^2 -2 \bar{d}d_2 & \cdots & 2d_2d_{n_V}-\bar{d}(d_2+d_{n_V}) \\
        \vdots & \vdots & \ddots & \vdots \\
        2d_{n_V}d_1 -\bar{d}(d_{n_V}+d_1) & 2d_{n_V}d_{2}-\bar{d}(d_{n_V}+d_{2}) & \cdots & 2d_{n_V}^2 -2 \bar{d}d_{n_V} 
      \end{pmatrix}\,.
  \end{align*}
  Substituting them into \eqref{eq:tmp2} and using $\Delta^+ (1,\cdots,1)^T = 0$, 
  we obtain \eqref{eq:Tr tSigma' Sigma'}. 
\end{proof}

We then arrive at the following theorem: 
\begin{theorem}
  \label{th:enhancement}
  Suppose $u\ne 1$. 
  When $G$ is not a tree graph, $q=(1-u)^{-1}$ is a pole of $\zeta_G(q,u)$ whose order is $n_E-n_V+1$ if $u\ne u_*$ 
  while the order of the pole is enhanced to more than or equal to $n_E-n_V+2$ if and only if $u=u_*$.
  When $G$ is a tree graph, $q=(1-u)^{-1}$ is not a pole if $u\ne u_*$ while it becomes a pole if and only if $u=u_*$.
  In particular, if the condition, 
  \begin{equation}
    | (L^+)^T \vec{d} | ^2 \ne n_E\,,
    \label{eq:condition}
  \end{equation}
  is satisfied, 
  the order of the pole at $q=(1-u)^{-1}$ is exactly enhanced to $n_E-n_V+2$ at $u=u_*$. 
\end{theorem}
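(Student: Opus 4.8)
The plan is to read off the pole order directly from the vertex expression \eqref{eq:vertex Bartholdi}, writing $\Sigma\equiv\bsone_{n_V}-qA+q^2Q_u$ as in Lemma \ref{lem:det sigma at 1-u inv}. At $q=(1-u)^{-1}$ the prefactor $\bigl(1-(1-u)^2q^2\bigr)^{-(n_E-n_V)}$ contributes a pole of order $n_E-n_V$ (a zero of that order when $G$ is a tree, where $n_E-n_V=-1$), while $\det(\Sigma)^{-1}$ contributes a pole whose order equals the order of vanishing of $\det\Sigma$ at $q=(1-u)^{-1}$. Thus the whole problem reduces to computing the order of the zero of $\det\Sigma$ there, which by Lemma \ref{lem:det sigma at 1-u inv} is at least one; the total pole order is then $(n_E-n_V)$ plus the order of the zero of $\det\Sigma$, a single formula that covers the tree case automatically.

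For the enhancement statement I would invoke Lemma \ref{lem:det prime}, which already supplies the first-order coefficient $\frac{2\kappa n_V}{(1-u)^{n_V-1}}(u-u_*)$. Since $G$ is connected we have $\kappa\ge 1$, so this coefficient is nonzero precisely when $u\ne u_*$; in that case $\det\Sigma$ has a simple zero and the pole order is exactly $n_E-n_V+1$, whereas at $u=u_*$ the coefficient vanishes and the zero has order at least two, giving a pole of order at least $n_E-n_V+2$. This establishes the ``if and only if'' for the enhancement, and in the tree case it says precisely that $q=(1-u)^{-1}$ is a pole only at $u=u_*$.

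The remaining and main task is to show that at $u=u_*$ the zero has order \emph{exactly} two under the condition \eqref{eq:condition}, i.e. that the second-order coefficient is nonzero. I would compute $(\det\Sigma)''$ from Jacobi's formula $\partial_q\det\Sigma=\Tr(\adjSigma\,\Sigma')$, giving $(\det\Sigma)''=\Tr(\adjSigma'\Sigma')+\Tr(\adjSigma\,\Sigma'')$. The first trace is exactly the content of Lemma \ref{lem:Tr tSigma' Sigma'}, equal to $\frac{-2\kappa}{(1-u_*)^{n_V-2}}\,\vec{d}\cdot\Delta^+\vec{d}$. For the second trace I would use $\Sigma''=2Q_u$ together with the value of $\adjSigma$ at $q=(1-u)^{-1}$ from Lemma \ref{lem:tSigma at critical}; since $\adjSigma$ is proportional to the all-ones matrix, $\Tr(\adjSigma\,\Sigma'')$ collapses to $\frac{2\kappa}{(1-u_*)^{n_V-1}}\Tr Q_{u_*}$, which with $1-u_*=\bar{d}/2$ and $n_V\bar{d}=2n_E$ evaluates to $\frac{2\kappa n_E}{(1-u_*)^{n_V-2}}$. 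Adding the two yields
\[
  (\det\Sigma)''\big|_{q=(1-u_*)^{-1},\,u=u_*}
  =\frac{2\kappa}{(1-u_*)^{n_V-2}}\bigl(n_E-\vec{d}\cdot\Delta^+\vec{d}\bigr).
\]

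Finally I would identify $\vec{d}\cdot\Delta^+\vec{d}=|(L^+)^T\vec{d}|^2$: writing $\Delta=L^TL$ and using $\Delta^+=L^+(L^+)^T$ (which follows at once from a compact singular value decomposition of $L$), one has $\vec{d}^{\,T}\Delta^+\vec{d}=\bigl((L^+)^T\vec{d}\bigr)^T\bigl((L^+)^T\vec{d}\bigr)$. Hence the second-order coefficient is nonzero precisely when $|(L^+)^T\vec{d}|^2\ne n_E$, so under \eqref{eq:condition} the zero of $\det\Sigma$ has order exactly two and the pole order is exactly $n_E-n_V+2$. The main obstacle I anticipate is bookkeeping rather than conceptual: one must check that the ambiguity in $\adjSigma'$ (the undetermined $\beta\,\mathbf{1}\mathbf{1}^T$ term appearing in the proof of Lemma \ref{lem:Tr tSigma' Sigma'}) drops out of $\Tr(\adjSigma'\Sigma')$ — it does, because $\Sigma'_*\mathbf{1}=\vec{d}-\bar{d}\,\mathbf{1}$ is orthogonal to $\mathbf{1}$ — and that $\Delta^+=L^+(L^+)^T$ is applied correctly; both are routine but easy to mishandle.
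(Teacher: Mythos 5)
Your proposal is correct and follows essentially the same route as the paper's own proof: it reads the pole order off the vertex expression, uses Lemma \ref{lem:det prime} for the first-order coefficient $\frac{2\kappa n_V}{(1-u)^{n_V-1}}(u-u_*)$ to settle the $u\ne u_*$ case and the ``if and only if'' for enhancement, and then evaluates $(\det\Sigma)''=\Tr(\adjSigma'\Sigma')+\Tr(\adjSigma\,\Sigma'')$ at $u=u_*$ via Lemmas \ref{lem:tSigma at critical} and \ref{lem:Tr tSigma' Sigma'}, arriving at the same coefficient $\frac{2\kappa}{(1-u_*)^{n_V-2}}\bigl(n_E-\vec{d}\cdot\Delta^+\vec{d}\bigr)$ and the identification $\vec{d}\cdot\Delta^+\vec{d}=|(L^+)^T\vec{d}|^2$. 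Your explicit checks that the $\beta\,\mathbf{1}\mathbf{1}^T$ ambiguity in $\adjSigma'$ drops out and that $\Delta^+=L^+(L^+)^T$ are sound and merely make explicit what the paper leaves implicit.
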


\begin{proof}
  As a consequence of Lemma \ref{lem:det sigma at 1-u inv} and Lemma \ref{lem:det prime}, 
  $\det\Sigma$ has a simple zero at $q=(1-u)^{-1}$ if $u\ne u_*$. 
  This means that, if $u\ne u_*$, 
  $q=(1-u)^{-1}$ is a pole of $\zeta_G(q,u)$ of order $n_E-n_V+1$ when $G$ is not a tree and is not a pole when $G$ is a tree. 

  On the other hand, 
  from Lemma \ref{lem:det prime}, 
  the order of the zero of $\det\Sigma$ at $q=(1-u)^{-1}$ is enhanced only at $u=u_*$. 
  Therefore, the order of the pole at $q=(1-u)^{-1}$ of $\zeta_G(q,u)$ becomes 
  more than or equal to $n_E-n_V+2$ only when $u=u_*$, 
  which is true even for a tree graph. 

  The order of the pole when $u=u_*$ depends on whether 
  \begin{equation}
    \frac{\partial^2}{\partial q^2}(\det\Sigma) 
    = \Tr(\adjSigma'\Sigma') + \Tr(\adjSigma\Sigma'') 
  \end{equation}
  at $q=(1-u)^{-1}$ and $u=u_*$ is zero or not. 
  Since $\Sigma''$ is given by 
  \begin{equation}
    \Sigma'' = 2(1-u)\diag(u+d_1-1,\cdots,u+d_{n_V}-1)\,,
  \end{equation}
  $\Tr (\adjSigma\Sigma'')$ can be estimated at $q=(1-u)^{-1}$ and $u=u_*$ as 
  \begin{equation}
  \Tr(\adjSigma\Sigma'') \Big|_{q=(1-u)^{-1},u=u_*}
  =  \frac{\kappa}{(1-u_*)^{n_V-2}} n_V \bar{d}
  =  \frac{2\kappa n_E}{(1-u_*)^{n_V-2}} 
  \end{equation}
  by using Lemma \ref{lem:tSigma at critical}.
  By combining this result and Lemma \ref{lem:Tr tSigma' Sigma'}, 
  we can estimate the second derivative of $\det\Sigma$ by $q$ at $q=(1-u)^{-1}$ and $u=u_*$ as 
  \begin{equation}
    \frac{\partial^2}{\partial q^2}(\det\Sigma) \Big|_{q=(1-u)^{-1},u=u_*}
    =  -\frac{2\kappa}{(1-u_*)^{n_V-2}} 
    \left(
    \vec{d}\cdot \Delta^+ \vec{d} - n_E 
    \right)\,.
  \end{equation} 
  Since the graph Laplacian $\Delta$ is expressed as $\Delta=L^TL$, 
  the quantity $\vec{d}\cdot \Delta^+ \vec{d}$ can be written as 
  \begin{equation}
    \vec{d}\cdot \Delta^+ \vec{d}
    = |(L^+)^T \vec{d}|^2\,.
  \end{equation}
  Therefore, if the graph satisfies the condition \eqref{eq:condition}, 
  the order of the pole at $q=(1-u)^{-1}$ of $\zeta_G(q,u)$ is enhanced exactly to $n_E-n_V+2$ when and only when $u=u_*$.
\end{proof}

Combining it with Lemma \ref{lem:det sigma at minus 1-u inv} and Lemma \ref{lem:even zeta for bipartite}, we immediately obtain the following. 
\begin{corollary}
  \label{col:pole at minus 1-u inv} 
  When a graph $G$ is not bipartite and $n_E>n_V$, $q=-(1-u)^{-1}$ is a pole of $\zeta_G(q,u)$ of order $n_E-n_V$. 
  When $G$ is not bipartite and $n_E=n_V$, $q=-(1-u)^{-1}$ is not a pole. 
  When $G$ is bipartite but not a tree, $q=-(1-u)^{-1}$ is a pole of $\zeta_G(q,u)$ 
  whose order is $n_E-n_V+1$ if $u\ne u_*$ and is enhanced to more than or equal to $n_E-n_V+2$ if and only if $u=u_*$.
  When $G$ is a tree graph, $q=-(1-u)^{-1}$ is not a pole if $u\ne u_*$ while it becomes a pole if and only if $u=u_*$.
  In particular, if the graph satisfies 
  $| (L^+)^T \vec{d} | ^2 \ne n_E$, 
  the order is exactly enhanced to $n_E-n_V+2$ at $u=u_*$. 
\end{corollary}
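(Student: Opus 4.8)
The plan is to read everything off the vertex expression \eqref{eq:vertex Bartholdi}, $\zeta_G(q,u) = \bigl(1-(1-u)^2q^2\bigr)^{-(n_E-n_V)}(\det\Sigma)^{-1}$ with $\Sigma = \bsone_{n_V} - qA + q^2 Q_u$, and to split the analysis into the non-bipartite and bipartite cases. First I would factor the prefactor as $1-(1-u)^2q^2 = (1-(1-u)q)(1+(1-u)q)$ and observe that only the second factor vanishes at $q=-(1-u)^{-1}$, where it has a simple zero; hence the prefactor contributes a pole of order exactly $n_E-n_V$ there when $n_E>n_V$, and contributes nothing when $n_E=n_V$ (the exponent being zero).

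For the non-bipartite case, Lemma \ref{lem:det sigma at minus 1-u inv} gives $\det\Sigma\big|_{q=-(1-u)^{-1}}\ne 0$, so $(\det\Sigma)^{-1}$ is holomorphic and non-vanishing at $q=-(1-u)^{-1}$. The entire singular behavior is therefore carried by the prefactor: for $n_E>n_V$ this is a pole of order exactly $n_E-n_V$, and for $n_E=n_V$ the function is finite and non-zero, so there is no pole. This disposes of the first two assertions of the corollary.

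For the bipartite case I would instead invoke Lemma \ref{lem:even zeta for bipartite}: since $\zeta_G(q,u)$ is even in $q$ on a bipartite graph, its Laurent expansion at $q=-(1-u)^{-1}$ is the mirror image of its expansion at $q=+(1-u)^{-1}$, so the two poles have identical order (if $f(q)=f(-q)$ has a pole of order $k$ at $q_0$, it has a pole of order $k$ at $-q_0$). Theorem \ref{th:enhancement} then applies verbatim at $q=-(1-u)^{-1}$: for a bipartite non-tree the order is $n_E-n_V+1$ for $u\ne u_*$ and is enhanced to at least $n_E-n_V+2$ exactly at $u=u_*$; for a tree—which is automatically bipartite, with $n_E=n_V-1$—the vanishing exponent $n_E-n_V+1=0$ means there is no pole unless $u=u_*$, at which value one appears. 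The refinement that the enhanced order is exactly $n_E-n_V+2$ under $|(L^+)^T\vec d|^2\ne n_E$ transfers immediately, as it is the statement about the second derivative of $\det\Sigma$ already established in Theorem \ref{th:enhancement}.

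Since the argument is a direct combination of three results already proved, there is no hard analytic step; the result is essentially bookkeeping. The only point that requires care is the logical routing of the tree case: a tree is bipartite, so it must be handled through the evenness argument and Theorem \ref{th:enhancement} rather than through the prefactor, and one should verify that the ``order $n_E-n_V+1$'' formula degenerates correctly to ``not a pole'' precisely when $n_E-n_V+1=0$.
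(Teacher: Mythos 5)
Your proposal is correct and follows essentially the same route as the paper: the non-bipartite case is read off the vertex expression using Lemma \ref{lem:det sigma at minus 1-u inv} (so only the prefactor $\bigl(1-(1-u)^2q^2\bigr)^{-(n_E-n_V)}$ contributes), and the bipartite case is reduced to Theorem \ref{th:enhancement} via the evenness of $\zeta_G(q,u)$ from Lemma \ref{lem:even zeta for bipartite}. Your write-up is in fact more explicit than the paper's one-line ``combining'' proof, and the bookkeeping (including the degenerate tree case $n_E-n_V+1=0$) is handled correctly.
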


Note that, 
when $G$ is the regular graph, the condition \eqref{eq:condition} is trivially satisfied because $(L^+)^T \vec{d}=0$.
We can understand it as a consequence of the functional equation. 
In fact,
when $G$ is the $(t+1)$-regular graph, 
\[
 u_* = \frac{1-t}{2}\,,
\]
is nothing but the self-dual point of the duality \eqref{eq:functional equation Bartholdi wrt u}. 
As shown in Theorem \ref{th:range of poles regular}, 
$q=(1-u)^{-1}$ and $q=(t+u)^{-1}$ are poles of the Bartholdi zeta function 
and they coincide at $u=u_*$. 
This is why the order of the pole at $q=(1-u)^{-1}$ enhances by one at $u=u_*$
when $G$ is the regular graph. 
Theorem \ref{th:enhancement} indicates that this is not a special phenomenon limited to the regular graph case but rather a general phenomenon of the Bartholdi zeta function. 

We remark that it is still an open problem whether there is a graph that satisfies the equality $| (L^+)^T \vec{d}|^2 = n_E$ and the order of the pole at $q=(1-u)^{-1}$ becomes more than $n_E-n_V+2$.



\section{Discussions and future problems}
\label{sec:discussion}

Since we now have the functional equations and the pole structure of the Bartholdi zeta function, we can examine the nature of the FKM model with $u \ne 0$ by using these properties and repeating the strategy used in \cite{Matsuura:2024gdu}.

We have shown that the functional equation also holds for the bump parameter $u$ such as \eqref{eq:functional equation Bartholdi wrt u},
which gives a non-trivial relationship between the Bartholdi zeta function and the Ihara zeta function as seen in Corollary \ref{col:Ihara-Bartholdi duality}.
Furthermore, 
the Bartholdi zeta function in the large $q$ region can essentially be expressed as the inverse of a polynomial in $1/q$ as shown in \eqref{eq:vertex Bartholdi dual}. 
Similar to the usual Bartholdi zeta function, this polynomial is the characteristic polynomial of a certain matrix, 
but it does not coincide with any known graph zeta function when $G$ is the irregular graph. 
Investigating whether these relationships provide a duality for a wider class of graph zeta functions is worthwhile.

As mentioned in Sec.~\ref{sec:poles}, we have shown that there are two poles at $q=(1-u)^{-1}$ and $q=(t+u)^{-1}$ on the boundary of a critical strip when $G$ is the $(t+1)$-regular graph and the pole at $q=(t+u)^{-1}$ is simple when $u$ satisfies $u\ge 0$ or $u\le -(t-1)$.
This result is particularly important in the context of physics because we usually assume that $q$ and $u$ are real values when we consider the FKM model, and information on the poles of the Bartholdi zeta function is related to the stability of the corresponding FKM model \cite{Matsuura:2024gdu}. 
On the other hand, from a mathematical perspective, how the order of these poles changes when $-(t-1)<u<0$ or when $u$ is complex is an interesting open problem. 
Additionally, there are still many unknowns about the structure of poles when $G$ is an irregular graph, which is also an issue for future research.

\section*{Acknowledgments}
The authors would like to thank 
J.~Fujisawa 
for useful discussions and comments.
This work was supported in part by JSPS KAKENHI Grant Number 23K03423 (K.~O.).


\appendix
\section{Proof of the functional equation via the Hashimoto expression}
\label{app:edge duality}

In this appendix, we provide another proof of the functional equation \eqref{eq:functional equation Bartholdi} of the Bartholdi zeta function $\zeta_G(q,u)$ on a $(t+1)$-regular graph $G$.

We start with showing the identity which holds for a general graph $G$, 
\begin{align}
  \det B_u = \det(W+uJ) 
  = (-1)^{n_E-n_V} (1-u)^{2(n_E-n_V)} \det Q_u \,.
  \label{eq:det Bu}
\end{align}
From \eqref{eq:BBdagger} and \eqref{eq:Spec F}, 
the spectrum of $B_u^\dagger B_u$ is given by 
\[
  {\rm Spec} (B_uB_u^\dagger) = 
  \{
    |d_1 + u - 1|^2, 
    \cdots 
    |d_{n_V} + u - 1|^2, 
    \overbrace{|1-u|^2,\cdots,|1-u|^2}^{2n_{E}-n_V}
    \}\,. 
\]
Therefore, since the elements of $B_u$ are $\pm 1$ or $u$, $\det B_u$ can be written as 
\[
  \det B_u = \epsilon (1-u)^{2n_E-n_V} \prod_{v\in V}(u+t_v)
  = \epsilon (1-u)^{n_E-n_V} \det Q_u\,,
\]
where $\epsilon=\pm 1$. 
To determine $\epsilon$, we note $\lim_{u\to\infty} u^{-2n_E}\det B_u = \epsilon (-1)^{n_V}$. 
Since we can directly evaluate this quantity as 
\[
  \lim_{u\to\infty} u^{-2n_E}\det B_u 
  = \lim_{u\to\infty} \det(u^{-1}W + J)  
  = \det J = (-1)^{n_E}\,,
\]
thus $\epsilon$ is determined to be $\epsilon = (-1)^{n_E-n_V}$. 
Therefore, the identity \eqref{eq:det Bu} holds. 

Then, let us prove the functional equation \eqref{eq:functional equation Bartholdi} from the Hashimoto expression \eqref{eq:edge Bartholdi}. 
The Bartholdi zeta function $\zeta_G(1/(1-u)(t+u)q,u)$ on a $(t+1)$-regular graph can be rewritten as 
\begin{align}
  \zeta_G&(1/(1-u)(t+u)q,u) \nn \\
  &= \det(\bsone_{2n_E}-((1-u)(t+u)q)^{-1} B_u)^{-1} \nn \\
  &= ((1-u)(t+u)q)^{2n_E}(\det B_u)^{-1}\,
  \det(\bsone_{2n_E} - (1-u)(t+u)q B_u^{-1})^{-1}\,. 
  \label{eq:tmp3}
\end{align}
Substituting the inverse of the matrix $B_u$, 
\begin{equation}
  B_u^{-1} = 
  \frac{1}{(1-u)(t+u)}\left(
   W + (1-t-u)J
  \right)\,,
\end{equation}
and $\det B_u$ for the $(t+1)$-regular graph obtained from \eqref{eq:det Bu}, 
\[
\det B_u = (-1)^{n_E-n_V} (1-u)^{2n_E-n_V} (t+u)^{n_V}\,,  
\] 
into \eqref{eq:tmp3},
we can further rewrite it as 
\begin{align*}
  \zeta_G&(1/(1-u)(t+u)q,u) \\
  &= 
  (-1)^{n_E-n_V}(1-u)^{n_V}(t+u)^{2n_E-n_V}
  q^{2n_E}
  \det(\bsone_{2n_E} - q(W+(1-t-u)J))^{-1} \\ 
  &= 
  (-1)^{n_E-n_V}(1-u)^{n_V}(t+u)^{2n_E-n_V}
  q^{2n_E}
  \zeta_G(q,1-t-u) \\ 
  &= 
  (-1)^{n_E-n_V}(1-u)^{n_V}(t+u)^{2n_E-n_V}
  q^{2n_E}
  \frac{(1-(1-u)^2 q^2)^{n_E-n_V}}{(1-(t+u)^2q^2)^{n_E-n_V}} \zeta_G(q,u)\,,
\end{align*}
where 
we have used the functional equation with respect to $u$ \eqref{eq:functional equation Bartholdi wrt u} in the final line. 
This is nothing but the functional equation \eqref{eq:functional equation Bartholdi}.

\section{Analysis of the poles {\boldmath $q=\pm(1-u)^{-1}$} via the Hashimoto expression}
\label{app:trivial poles}

In this appendix, we discuss the order of the poles at $q=\pm(1-u)^{-1}$ by using the Hashimoto expression. 

  As we have mentioned in the proof of Proposition \ref{prop:range of poles}, the poles of the Bartholdi zeta function are given by the inverse of the eigenvalues of $B_u=W+uJ$. 
  It is useful to consider the eigenvectors $\vec{w}^\pm_e$ ($e\in E$) of $J$,  
  \begin{align}
    \left(\vec{w}_e^\pm \right)_{\bsf} \equiv \delta_{e\,\bsf} \pm \delta_{e^{-1}\,\bsf}\,,
  \end{align}
  which satisfy
  \begin{align}
    J \, \vec{w}^{\pm}_e = \pm \vec{w}^\pm_e\,.
  \end{align}
  The edge adjacency matrix $W$ acts on these vectors as 
  \begin{align}
    \begin{split}
    \left(W \vec{w}_e^+ \right)_{\bsf} &= \tL_{e\,t(\bsf)} - \left(\vec{w}^+_e\right)_\bsf\,,\\
    \left(W \vec{w}_e^- \right)_{\bsf} &= L_{e\,t(\bsf)} + \left(\vec{w}^-_e\right)_\bsf\,,\\
    \end{split}
  \end{align}
  where $\tL$ and $L$ 
  are nothing but the $(0,1)$-incidence martix \eqref{eq:01incidence matrix} and the incidence matrix \eqref{eq:incidence matrix}, respectively. 
  This means that, if 
  $\alpha^+ \equiv \left(\alpha^+_1,\cdots,\alpha^+_{n_E}\right)^T$ 
  and
  $\alpha^- \equiv \left(\alpha^-_1,\cdots,\alpha^-_{n_E}\right)^T$ 
  are kernels of $\tL^T$ and $L^T$, respectively, the vectors, 
  \begin{align}
    \vec{w}_{\alpha^\pm} \equiv \sum_{e\in E} \alpha_e^\pm \vec{w}^\pm_e \,,
  \end{align}
  are not only the eigenvectors of $J$ but also those of $W$; 
  \begin{align}
    W \vec{w}_{\alpha^\pm} = \mp \vec{w}_{\alpha^\pm}\,.
  \end{align}
  Thus 
  $\vec{w}_{\alpha^\pm}$ are the eigenvectors of $B_u$ corresponding to the eigenvalues 
  $\mp(1-u)$. 

  Since the rank of the incidence matrix is $n_V-1$ from Lemma \ref{lem:rank of incidence}, 
  $1-u$ is at least $(n_E-n_V+1)$-fold eigenvalue of $W$. 
  Similarly, from Lemma \ref{lem:rank of 01-incidence}, since the rank of the $(0,1)$-incidence matrix is $n_V-1$ for a bipartite graph and $n_V$ for a non-bipartite graph, 
  $-(1-u)$ is an eigenvalue of $B_u$ with multiplicity at least $(n_E-n_V+1)$ if the graph is bipartite and at least $(n_E-n_V)$ if the graph is not bipartite. 

  Comparing this result with Theorem \ref{th:enhancement}, we see that the eigenvectors $\vec{w}_{\pm}$ correspond exactly to the poles at $q=\mp(1-u)^{-1}$ that exist regardless of the value of $u$.

\bibliographystyle{unsrt}
\bibliography{refs}

\end{document}